\newtheorem*{abstract*}{Abstract}
\newtheorem{theorem}{Theorem}[section]
\newtheorem{assumption}{Assumption}
\newtheorem{example}[theorem]{Example}
\newtheorem{lemma}[theorem]{Lemma}
\newtheorem{proposition}[theorem]{Proposition}
\newtheorem{remark}[theorem]{Remark}
\begin{document}

\sloppy

\title{On a version of hybrid existence result for a system of nonlinear equations}

\author{Micha{\l} Be{\l}dzi{\'n}ski, Marek Galewski and Igor Kossowski}
		
\maketitle

\begin{center}
    Institute of Mathematics, Lodz University of Technology,\\
    Lodz, Poland.
\end{center}

Corresponding author e-mail: marek.galewski@p.lodz.pl

\abstract{Combining monotonicity theory related to the parametric version of the Browder-Minty Theorem with fixed point arguments we obtain hybrid existence results for a system of two operator equations. Applications are given to a system of boundary value problems with mixed nonlocal and Dirichlet conditions.}
 
\section{Introduction}

Using the general result about the continuous dependence on parameters for
fixed points obtained via the Banach Contraction Principle, Avramescu in 
\cite{Avramescu} proved what follows:

\begin{theorem}
	\label{TheoOFAvramescu}Let $(D_{1},d)$ be a complete metric space, $D_{2}$ a
	closed convex subset of a normed space $(Y,\|\cdot\|)$, and
	let $N_{i}:D_{1}\times D_{2}\longrightarrow D_{i}$, $i=1,2$ be continuous
	mappings. Assume that the following conditions are satisfied:\newline
	(a) There is a constant $L\in \lbrack 0,1)$, such that: 
	\begin{equation*}
		d\big(N_{1}(x,y),N_{1}(\overline{x},y)\big)\leq Ld(x,\overline{x})\text{ for all }x,\overline{x}%
		\in D_{1}\text{ }and\text{ }y\in D_{2};
	\end{equation*}%
	(b) $N_{2}(D_{1}\times D_{2})$ is a relatively compact subset of $Y$.\newline
	Then, there exists $(x,y)\in D_{1}\times D_{2}$ with:%
	\begin{equation*}
		N_{1}(x,y)=x,\text{ }N_{2}(x,y)=y.
	\end{equation*}
\end{theorem}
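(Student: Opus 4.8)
The plan is to decouple the two equations by first solving the contraction equation $N_1(x,y)=x$ for $x$ as a function of the parameter $y$, and then to feed the resulting solution operator into a Schauder-type fixed point argument for the second equation.

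First I would fix $y\in D_2$ and observe that, by hypothesis (a), the map $x\mapsto N_1(x,y)$ is a contraction of the complete metric space $(D_1,d)$ with constant $L\in[0,1)$ that is independent of $y$. The Banach Contraction Principle then yields a unique point $x=\varphi(y)\in D_1$ with $N_1(\varphi(y),y)=\varphi(y)$, and this defines a map $\varphi\colon D_2\to D_1$.

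The key step is to show that $\varphi$ is continuous. Given $y,\bar y\in D_2$, I would estimate, using the two fixed point identities and the triangle inequality,
\begin{equation*}
d\big(\varphi(y),\varphi(\bar y)\big)\le d\big(N_1(\varphi(y),y),N_1(\varphi(\bar y),y)\big)+d\big(N_1(\varphi(\bar y),y),N_1(\varphi(\bar y),\bar y)\big),
\end{equation*}
and bound the first term on the right by $L\,d(\varphi(y),\varphi(\bar y))$ via (a). Rearranging gives
\begin{equation*}
(1-L)\,d\big(\varphi(y),\varphi(\bar y)\big)\le d\big(N_1(\varphi(\bar y),y),N_1(\varphi(\bar y),\bar y)\big),
\end{equation*}
and since $N_1$ is continuous in its second argument while $1-L>0$, the right-hand side tends to $0$ as $\bar y\to y$; hence $\varphi$ is continuous.

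Next I would define $T\colon D_2\to D_2$ by $T(y)=N_2(\varphi(y),y)$; this is well defined because $N_2$ takes values in $D_2$, and it is continuous as a composition of continuous maps. Its range satisfies $T(D_2)\subseteq N_2(D_1\times D_2)$, which is relatively compact by (b). Since $D_2$ is a closed convex subset of the normed space $Y$ and $\overline{T(D_2)}$ is therefore a compact subset of $D_2$, Schauder's fixed point theorem provides a point $y^\ast\in D_2$ with $T(y^\ast)=y^\ast$. Setting $x^\ast=\varphi(y^\ast)$, the fixed point identity for $\varphi$ gives $N_1(x^\ast,y^\ast)=x^\ast$, while $N_2(x^\ast,y^\ast)=T(y^\ast)=y^\ast$, so $(x^\ast,y^\ast)$ solves the system. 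The main obstacle I expect is the continuity of $\varphi$: it is exactly here that the uniformity of the contraction constant $L$ in the parameter $y$ is essential, since without it the factor $(1-L)$ could degenerate and the rearrangement above would break down. A secondary point to handle with care is invoking the correct form of Schauder's theorem for a possibly unbounded closed convex set, which is legitimate precisely because the range of $T$ is relatively compact.
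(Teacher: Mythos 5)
Your overall strategy --- solving $N_1(x,y)=x$ via the Banach Contraction Principle to obtain a solution map $\varphi\colon D_2\longrightarrow D_1$, proving $\varphi$ continuous, and then applying Schauder's theorem to $T(y)=N_2(\varphi(y),y)$ --- is exactly the classical route; note that the paper itself does not prove this theorem (it recalls it from Avramescu and explicitly declines to reprove it), and the known proof is the one you outline. Your Schauder step is handled correctly: $\overline{T(D_2)}\subseteq\overline{N_2(D_1\times D_2)}$ is compact and, since $D_2$ is closed, it lies in $D_2$, so the version of Schauder's theorem for continuous self-maps of a closed convex set with relatively compact range applies even though $D_2$ may be unbounded.

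There is, however, a genuine gap in your continuity argument for $\varphi$, and it sits precisely at the step you flag as critical. In your splitting
\begin{equation*}
d\big(\varphi(y),\varphi(\bar y)\big)\le d\big(N_1(\varphi(y),y),N_1(\varphi(\bar y),y)\big)+d\big(N_1(\varphi(\bar y),y),N_1(\varphi(\bar y),\bar y)\big),
\end{equation*}
the second term has the \emph{moving} point $\varphi(\bar y)$ in the first slot of $N_1$. Continuity of $N_1(x,\cdot)$ for each fixed $x$ --- or even joint continuity of $N_1$ --- does not let you conclude that this term tends to $0$ as $\bar y\to y$, because you know nothing yet about the behaviour of $\varphi(\bar y)$ (its convergence to $\varphi(y)$ is exactly what you are trying to prove, so the reasoning is circular in flavor); one would need $\varphi(\bar y)$ to stay in a compact set, or equicontinuity of $N_1$ in $y$ uniformly in $x$, neither of which is assumed. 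The repair is standard and costs one line: insert $N_1(\varphi(y),\bar y)$ as the intermediate point instead, so that the contraction estimate absorbs the term with the moving argument and you are left with
\begin{equation*}
(1-L)\,d\big(\varphi(y),\varphi(\bar y)\big)\le d\big(N_1(\varphi(y),y),N_1(\varphi(y),\bar y)\big),
\end{equation*}
where the first argument is now frozen at $\varphi(y)$; continuity of $N_1(\varphi(y),\cdot)$ legitimately sends the right-hand side to $0$ as $\bar y\to y$. With this correction the rest of your proof goes through.
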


The above mentioned fixed point theorem has recently been revisited in \cite{PrecupFixed} by imposing partial variational structure on the second mapping. The existence result obtained in \cite{PrecupFixed} instead of the Schauder Theorem employs in the proof the Ekeland Variational Principle. Hence a type of hybrid existence result has been obtained and further used for proving the existence of periodic solutions to a second order semi-linear system of ODEs. Relations to the Krasnoselskii Theorem are also explained. We observe here that a constant $L$ in the above can be replaced with a continuous function having values in $\lbrack 0,1)$ in which case Theorem \ref{TheoOFAvramescu} remains true with similar proof as recalled in  \cite{PrecupFixed}. Such remark is also valid if one considers main result from \cite{PrecupFixed}. We do not provide proof since these follow like in the sources mentioned. Now our slightly modified version reads:
\begin{theorem}
    Let $(D,d)$ be a complete metric space, $U$ an open subset of Hilbert space $Y$ identified with its dual. $N\colon D \times \overline{U}\longrightarrow D$ is continuous and $E \colon D \times \overline{U}\longrightarrow \mathbb{R}$ is a functional such that $E$ and $E'_y$ are continuous on $D \times \overline{U}$, where $E'_y$ denotes derivative of $E$ along second coordinate. Assume that the following conditions are satisfied:
    \begin{enumerate}
        \item There exists a continuous function $L\colon \overline{U} \longrightarrow [0,1)$ such that
        \begin{equation*}
            d\big( N(x,y), N (\overline{x},y)\big)\leq L(y) d(x,\overline{x})\quad \text{for all }x,\overline{x}\in D \text{ and }y\in \overline{U};
        \end{equation*}
        \item For every $x\in D$, $E(x, \cdot)$ is bounded from below on $\overline{U}$ and there is $a > 0$ with:
        \begin{equation*}
            \inf_{\partial U}E(x,\cdot) - \inf_{\overline{U}} E(x,\cdot) \geq a \quad \text{for every }x\in D;
        \end{equation*}
        \item there are two constants $R_0,\gamma > 0$ such that
        \begin{equation*}
            E(x,y) - \inf_{\overline{U}} E(x,\cdot) \geq \gamma \quad \text{for all }x\in D\text{ and }y \in \overline{U}\text{ with }\|y\| > R_0.
        \end{equation*}
        Moreover, the mapping $(N, I_Y - E'_y)$ is a Perov contraction on $D\times \overline{U}$ with respect to vector-valued metric $\widehat{d}\colon D\times Y \longrightarrow \mathbb{R}^2$,
        \begin{equation*}
            \widehat{d}\big((x_1,y_1),(x_2,y_2)\big) = \left(d(x_1,x_2), \|y_1 - y_2\|\right).
        \end{equation*}
    \end{enumerate}
    Then there exist $(x,y) \in D \times \overline{U}$ with 
    \begin{equation*}
        N(x,y) = x,\quad E'_y(x,y) = 0,\quad E(x,y) = \inf_{\overline{U}}E(x,\cdot).
    \end{equation*}
\end{theorem}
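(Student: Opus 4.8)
The plan is to use the two rows of the Perov matrix separately: the second row to solve, for each fixed $x$, the variational problem in $y$, and the first row to close the argument by a scalar Banach contraction in $x$. Denote the Perov matrix by $M=(m_{ij})_{i,j=1,2}$, a nonnegative matrix with $\rho(M)<1$, so that the Perov estimate reads $d\big(N(x_1,y_1),N(x_2,y_2)\big)\le m_{11}d(x_1,x_2)+m_{12}\|y_1-y_2\|$ and $\|(y_1-E'_y(x_1,y_1))-(y_2-E'_y(x_2,y_2))\|\le m_{21}d(x_1,x_2)+m_{22}\|y_1-y_2\|$. Two elementary facts about nonnegative $2\times2$ matrices will be used: $\rho(M)<1$ forces $m_{11},m_{22}<1$, and it is equivalent (given $m_{22}<1$) to $m_{11}+\frac{m_{12}m_{21}}{1-m_{22}}<1$. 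I note in passing that the first row with $y_1=y_2$ already gives assumption~1 with $L\equiv m_{11}$, so the Perov hypothesis subsumes the first-coordinate contraction.

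First I would carry out the variational step for a fixed $x\in D$. Putting $x_1=x_2=x$ in the second Perov estimate shows that $y\mapsto y-E'_y(x,y)$ is an $m_{22}$-contraction on $\overline U$; squaring this inequality in the Hilbert norm and rearranging yields the strong monotonicity $\langle E'_y(x,y_1)-E'_y(x,y_2),\,y_1-y_2\rangle\ge \tfrac{1-m_{22}^2}{2}\|y_1-y_2\|^2$. Since $E(x,\cdot)$ is continuous and bounded below on the complete space $\overline U$, Ekeland's variational principle produces, for each $n$, a point $y_n\in\overline U$ with $E(x,y_n)\le \inf_{\overline U}E(x,\cdot)+\tfrac1n$ and $E(x,z)\ge E(x,y_n)-\tfrac1n\|z-y_n\|$ for all $z\in\overline U$. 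By assumption~3 the bound $E(x,y_n)\to\inf_{\overline U}E(x,\cdot)$ keeps $\|y_n\|\le R_0$ eventually, and by assumption~2 the strict gap $a$ forces $y_n\in U$ once $E(x,y_n)<\inf_{\partial U}E(x,\cdot)$; interiority lets me insert $z=y_n+tv$ into the Ekeland inequality and let $t\to0^+$ to get $\|E'_y(x,y_n)\|\le\tfrac1n$. Feeding $\|E'_y(x,y_n)\|\to0$ into the strong monotonicity inequality shows $(y_n)$ is Cauchy, hence $y_n\to y(x)\in\overline U$ with $E'_y(x,y(x))=0$ and $E(x,y(x))=\inf_{\overline U}E(x,\cdot)$; the gap $a$ again places $y(x)\in U$, and the $m_{22}$-contraction makes $y(x)$ the unique critical point.

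Next I would establish Lipschitz dependence and glue. Writing $y(x_i)=y(x_i)-E'_y(x_i,y(x_i))$ (valid since $E'_y(x_i,y(x_i))=0$) and applying the second Perov estimate to $(x_1,y(x_1))$ and $(x_2,y(x_2))$ gives $\|y(x_1)-y(x_2)\|\le \tfrac{m_{21}}{1-m_{22}}d(x_1,x_2)$. Define $\Psi\colon D\to D$ by $\Psi(x)=N(x,y(x))$; the first Perov estimate together with this bound yields $d(\Psi(x_1),\Psi(x_2))\le\big(m_{11}+\tfrac{m_{12}m_{21}}{1-m_{22}}\big)d(x_1,x_2)$, and the matrix fact above makes the bracket $<1$. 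The Banach Contraction Principle then gives a unique $x^\ast\in D$ with $\Psi(x^\ast)=x^\ast$; setting $y^\ast=y(x^\ast)\in U$ produces $N(x^\ast,y^\ast)=x^\ast$, $E'_y(x^\ast,y^\ast)=0$ and $E(x^\ast,y^\ast)=\inf_{\overline U}E(x^\ast,\cdot)$, which is the claim.

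The main obstacle is the variational step. A bounded minimizing sequence for $E(x,\cdot)$ need not converge, and since $U$ is only open (not convex) no weak lower semicontinuity or direct-method compactness is available. The resolution is exactly the interplay of the hypotheses: Ekeland supplies an \emph{almost critical} minimizing sequence, assumptions~2 and~3 keep it bounded and interior, and the strong monotonicity coming from the $m_{22}$-block of the Perov contraction promotes ``almost critical'' to Cauchy. The two places needing care are the passage from the Ekeland slope to $\|E'_y(x,y_n)\|\to0$ (where interiority of $y_n$ is essential so that admissible variations exist in all directions) and the nonnegative-matrix computation giving $m_{11}+\frac{m_{12}m_{21}}{1-m_{22}}<1$ from $\rho(M)<1$.
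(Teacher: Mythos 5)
Your proof is correct, but it cannot coincide with ``the paper's own proof'' for a simple reason: the paper never writes one. It states explicitly that the result ``follows like in the sources mentioned'' and defers to the hybrid fixed-point/critical-point argument of \cite{PrecupFixed}, where the scheme is iterative and simultaneous: one alternates applications of $N$ with applications of Ekeland's variational principle, producing a double sequence of approximate solutions whose Cauchyness is extracted from the Perov matrix estimates, with assumptions 2 and 3 keeping the Ekeland points interior and bounded along the whole iteration; exact solvability of the inner minimization is never needed there. Your argument is a nested reduction instead, and its engine is an observation the referenced proof does not use: squaring the $m_{22}$-row of the Perov estimate yields the strong monotonicity $\langle E'_y(x,y_1)-E'_y(x,y_2),\,y_1-y_2\rangle \ge \tfrac{1-m_{22}^2}{2}\|y_1-y_2\|^2$, which upgrades the Ekeland almost-critical minimizing sequence (interior by assumption 2, so the slope bound genuinely gives $\|E'_y(x,y_n)\|\le 1/n$) to a Cauchy sequence, hence to an exact minimizer and unique critical point $y(x)$; the second row then gives $\|y(x_1)-y(x_2)\|\le \tfrac{m_{21}}{1-m_{22}}\,d(x_1,x_2)$, and the first row makes $x\mapsto N(x,y(x))$ a Banach contraction with constant $m_{11}+\tfrac{m_{12}m_{21}}{1-m_{22}}$, which is $<1$ precisely when $\rho(M)<1$ --- your nonnegative-matrix computations (including $\rho(M)\ge\max(m_{11},m_{22})$) are correct, as are the delicate points (completeness of $\overline U$, interiority before differentiating the Ekeland inequality, passing to the limit by continuity of $E$ and $E'_y$). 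What your route buys: it is self-contained, it delivers the solution with an exactly minimizing, uniquely determined $y$-component for each $x$, and it exposes that under the Perov hypothesis assumption 1 is redundant (it is the first row with $y_1=y_2$) and that the $(R_0,\gamma)$ part of assumption 3 is never actually used, since boundedness of the minimizing sequence plays no role once strong monotonicity gives Cauchyness. What the iterative route of \cite{PrecupFixed} buys: it does not require the inner problem to be exactly solvable, so it adapts to weaker hypotheses where only approximate critical points exist, and it is constructive, yielding error estimates for the approximating sequence.
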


Our aim in this work is to look at hybrid existence theorems from some other point of view, namely:
a) we get rid of the usage of the Banach Contraction Principle for the first equation;
b) we do not impose the uniqueness of a solution to the first equation in the system;
c) we still retain some fixed point technique for the second one.
Thus what we obtain is a type of hybrid existence result derived for systems of nonlinear equations but having also some connections to the Krasnoselskii Fixed Point Theorem. We start investigations with replacing the usage of a parameter dependent Banach Fixed Point Theorem with a parameter dependent monotonicity method based on a recent parametric version of the Browder-Minty Theorem. Since the mentioned tool is based on the coercivity of an underlying operator, we are in position to coin it further with the Schauder Fixed Point Theorem as is done in \cite{Avramescu}. In the proofs the theory of monotone operators is used (we follow \cite{galewskiBOOKS}, \cite{Motreanu2003} and \cite{Papageorgiou2019} for some background). Some relations to existing literature are mingled further on in the text when we compare our results with what is known.

The paper is organized as follows. Firstly we provide some necessary background on the monotonicity and variational tools which we employ. Next we proceed to our hybrid existence result. Some version of the Krasonselskii Fixed Point Theorem is then given which contains the usage of the Strongly Monotone Principle instead of the Banach Contraction Theorem. The applications to nonlinear systems in which the first equation is not semilinear are given in the last section. We consider the system of ordinary differential equations such that the second equation driven by the $q$-Laplacian is subject to nonlocal boundary conditions and therefore may not be considered by the monotone or the variational approach. On the other hand the first equation corresponds to the perturbed $p$-Laplacian Dirichlet problem driven by the Leray-Lions type operator whose consideration is much more difficult with fixed point approaches than with the monotonicity one as we do in this submission. An example is given at the end showing the applicability of our assumptions.

\section{Preliminaries}

\subsection{Monotone operators}

Let $X$ be a real, reflexive and separable Banach space. Recall that $X$ is called \emph{strictly convex} if for all distinct elements $u,w$ of a unit sphere we have $\|u + w\| < 2$. Consider an operator $A\colon X\longrightarrow X^*$. We say that $A$ is \emph{radially continuous} if for all $u,w\in X$ we have 
\begin{equation*}
	\tau_n \to \tau \text{ in }[0,1]\implies \langle A(u + \tau_n w),w\rangle \to \langle A(u + \tau w),w \rangle \text{ in }\mathbb{R}
\end{equation*}
If $u_n \to u$ in $X$ implies $A(u_n)\rightharpoonup A(u)$ in $X^*$, we say that $A$ is \emph{demicontinuous}. Both notions of continuity coincide (see \cite{galewskiBOOKS}) when $A$ is \emph{monotone}, that is when
\begin{equation*}
	\langle A(u) - A(w), u - w\rangle \geq 0\quad \text{for all }u,w\in X.
\end{equation*}
In case when the above inequality is strict for all distinct $u,w\in X$ we say that $A$ is \emph{strictly monotone}. Moreover, operator $A$ is called \emph{bounded} if it maps norm bounded subsets of $X$ into norm bounded subsets of $X^*$, and \emph{coercive} if there exists a coercive function $\gamma\colon [0,\infty) \longrightarrow \mathbb{R}$ such that 
\begin{equation*}
	\langle A(u),u\rangle \geq \gamma(\|u\|)\|u\|\quad \text{for every }u\in X.
\end{equation*}
We equip $X^*$ with a standard norm and recall that whenever $X^*$ is strictly convex, for every $u\in X$ there is a unique $J(u) \in X^*$ satisfying 
\begin{equation*}
	\|u\|^2 = \|J(u)\|_*^2 = \langle J(u), u\rangle.
\end{equation*}
The mapping $X\ni u \longmapsto J(u) \in X^*$ is called \emph{the duality mapping}. Using basic properties of the duality mapping we can show the following result.

\begin{proposition}
	\label{LemmaDuality}
	Assume that $X$ is a real and reflexive Banach space. Then there exists a demicontinuous, bounded, coercive and strictly monotone operator $J\colon X\longrightarrow X^*$ satisfying $J(0) = 0$.
\end{proposition}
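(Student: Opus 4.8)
The plan is to reduce to the case where both $X$ and $X^*$ are strictly convex and then invoke the standard properties of the normalized duality mapping. Since $X$ is reflexive, a classical renorming theorem (see e.g. \cite{galewskiBOOKS}) provides an equivalent norm on $X$ under which both $X$ and $X^*$ are strictly convex. Every property appearing in the statement is insensitive to such a change: the duality pairing $\langle\cdot,\cdot\rangle$ between $X$ and $X^*$ is unchanged, equivalent norms induce the same strong and weak topologies and the same bounded sets, so monotonicity, boundedness and demicontinuity are preserved, while coercivity is preserved after rescaling the comparison function $\gamma$. Hence it suffices to construct $J$ for the renormed space, and from now on I assume $X$ and $X^*$ strictly convex.

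With $X^*$ strictly convex the duality mapping $J$ recalled before the statement is well defined and single valued: for each $u$ the Hahn--Banach theorem yields a functional realizing $\|u\|^2=\langle J(u),u\rangle=\|J(u)\|_*^2$, and strict convexity of $X^*$ forces uniqueness. In particular $J(0)=0$. The identities $\|J(u)\|_*=\|u\|$ and $\langle J(u),u\rangle=\|u\|^2$ are then immediate and already give two of the required properties: boundedness, since $J$ maps a bounded set into a bounded set, and coercivity, since $\langle J(u),u\rangle=\|u\|\cdot\|u\|$, i.e. the choice $\gamma(t)=t$ works.

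For monotonicity I would expand $\langle J(u)-J(w),u-w\rangle$ and bound the cross terms by $\langle J(u),w\rangle\le\|u\|\,\|w\|$ and $\langle J(w),u\rangle\le\|w\|\,\|u\|$, obtaining $\langle J(u)-J(w),u-w\rangle\ge(\|u\|-\|w\|)^2\ge 0$. This already settles monotonicity; for \emph{strict} monotonicity the only case to rule out is $\|u\|=\|w\|$ with $u\ne w$, and there the equality in the estimate would force the norm-one functional $J(u)/\|u\|$ to attain its norm at the two distinct unit vectors $u/\|u\|$ and $w/\|w\|$, which is impossible in the strictly convex space $X$ (evaluate it at their midpoint). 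Thus $J$ is strictly monotone.

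It remains to prove demicontinuity, which I expect to be the most delicate step. Given $u_n\to u$ in $X$, the sequence $J(u_n)$ is bounded because $\|J(u_n)\|_*=\|u_n\|$, so by reflexivity every subsequence has a further subsequence with $J(u_{n_k})\rightharpoonup f$ for some $f\in X^*$. Splitting $\langle J(u_{n_k}),u_{n_k}\rangle=\langle J(u_{n_k}),u_{n_k}-u\rangle+\langle J(u_{n_k}),u\rangle$ and using $\|u_{n_k}\|\to\|u\|$ together with the strong convergence $u_{n_k}\to u$, one gets $\langle f,u\rangle=\|u\|^2$; combined with $\|f\|_*\le\liminf\|J(u_{n_k})\|_*=\|u\|$ and the reverse bound coming from $\langle f,u\rangle\le\|f\|_*\|u\|$, this shows $\|f\|_*=\|u\|$ and $\langle f,u\rangle=\|u\|^2=\|f\|_*^2$. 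Hence $f$ satisfies the defining relations of $J(u)$, so $f=J(u)$ by uniqueness. Since every subsequence admits a sub-subsequence converging weakly to the same limit $J(u)$, the whole sequence satisfies $J(u_n)\rightharpoonup J(u)$, proving demicontinuity. The genuinely non-elementary ingredient is the renorming theorem in the first paragraph; everything after it is a routine application of the geometry of the duality mapping.
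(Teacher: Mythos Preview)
Your proof is correct and follows essentially the same route as the paper: renorm the reflexive space so that the dual is strictly convex and take $J$ to be the resulting duality mapping, then observe that all the listed properties are invariant under passage to an equivalent norm. The only difference is cosmetic---the paper outsources the verification that the duality mapping is demicontinuous, bounded, coercive and strictly monotone to the reference \cite{galewskiBOOKS}, whereas you spell these out directly (and accordingly make explicit the need for $X$ itself, not just $X^*$, to be strictly convex in order to obtain \emph{strict} monotonicity).
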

\begin{proof}
    By \cite{Linderstrauss} there exists a norm $\|\cdot\|_1$, equivalent with $\|\cdot\|$, such that ${(X^*,\|\cdot\|_{1,*})}$ is strictly convex. Let $J$ denote the duality mapping on $(X,\|\cdot\|_1)$. Therefore, by \cite{galewskiBOOKS}, $J$ is demicontinuous, bounded, coercive and strictly monotone on $(X, \|\cdot\|_1)$. Since those notions are clearly not violated by taking equivalent norms, $J$ satisfies assumptions of lemma.
\end{proof}

When $X$ is a Hilbert space, $J$ is a linear isomorphism between $X$ and $X^*$, whose inverse is called \emph{the Riesz Operator}.

\subsection{Sobolev Spaces}

For convenience of the Reader we recall definitions and basic properties of Sobolev spaces on bounded interval $[0,1]$ following \cite{galewskiBOOKS} and \cite{MawhinProblemesDeDirichlet}. Take $p \in (1,\infty)$. Function $u\in L^p(0,1)$ is called \emph{weakly differentiable} if there exists $v\in L^p(0,1)$, called a \emph{weak derivative of $u$}, such that 
\begin{equation*}
    \int_0^1 u(t) \phi'(t) dt = -\int_0^1 v(t) \phi(t) dt\quad \text{for every }\phi \in C^\infty_c(0,1).
\end{equation*}
Here $C^\infty_c(0,1)$ denotes the space of all smooth and compactly supported functions $\phi\colon [0,1]\longrightarrow\mathbb{R}$. That is $\phi \in C^\infty_c(0,1)$ if and only if $\phi$ is of class $C^\infty$ and $\overline{\{t\in [0,1] : \phi(t) \neq 0\}}\subset (0,1)$. We put $\dot{u} := v$ and 
\begin{equation*}
    W^{1,p}(0,1) := \left\{u\in L^p(0,1) : \dot{u}\text{ exists and }\dot{u}\in L^p(0,1)\right\}.
\end{equation*}
We endow $W^{1,p}(0,1)$ with a standard norm
\begin{equation*}
    \|u\|_{W^{1,p}} := \left(\int_0^1 |\dot{u}(t)|^p dt + \int_0^1 |u(t)|^p dt\right)^\frac{1}{p}
\end{equation*}
and let $W^{1,p}_0(0,1)$ be a closure of $C^\infty_0(0,1)$ with respect to the norm $\|\cdot\|_{W^{1,p}_0}$. Clearly $W^{1,p}_0(0,1)$ is a closed subspace of $W^{1,p}(0,1)$. Moreover, there exists a positive number $\lambda_p > 0$, called \emph{the Poincar{\'e} constant}, characterised by 
\begin{equation*}
    \lambda_p = \inf_{u\in W^{1,p}_0(0,1)\setminus \{0\}} \left\{\displaystyle\int_0^1 |\dot{u}(t)|^p dt : \displaystyle\int_0^1 |u(t)|^p dt = 1\right\}.
\end{equation*}
Therefore a norm 
\begin{equation*}
    \|u\|_p := \left(\int_0^1 |\dot{u}(t)|^p dt\right)^\frac{1}{p}
\end{equation*}
is equivalent with $\|\cdot\|_{W^{1,p}}$ on $W^{1,p}_0(0,1)$. An embedding $W^{1,p}(0,1) \hookrightarrow C([0,1])$ is compact, see \cite{MawhinProblemesDeDirichlet} for details. Moreover an inequality
\begin{equation*}
    \|u\|_\infty \leq \|u\|_p,
\end{equation*}
called \emph{the Sobolev embedding}, holds for all $u\in W^{1,p}_0(0,1)$. Here $\|u\|_\infty := \sup_{0\leq t\leq 1}|u(t)|$. 

\section{Main results}

In this section we are concerned with studying the existence of solution to

\begin{equation}
	\label{MainSystem}
	\left\{
	\begin{array}{l}
		F(u,v) = 0, \\
		G(u,v) = v
	\end{array}
	\right.
\end{equation}

In the most of the approaches which follow \cite{Avramescu} a uniqueness of solution to first equation plays a crucial role and as mentioned it is obtained by the Banach Contraction Principle. Then the Schauder Fixed Point Theorem, the Schaefer or the Ekeland Variational Principle is used to provide the solvability result for system of equations. In the approach which we propose, using some perturbation technique based on the existence of a suitable duality mapping and the usage of the parametric Browder-Minty Theorem, we will omit the uniqueness assumption along with fixed point approach concerning the solvability of the first equation.

We provide main assumptions for this section. Notice that in Assumption \ref{AssumptionH2} we do not require the superlinear growth of $\gamma$.

\begin{assumption}
	\label{AssumptionH1}
    $Y$ is a normed space, $X$ is a real, reflexive and separable Banach space.
\end{assumption}
\begin{assumption}
	\label{AssumptionH2}
	Let $F\colon X\times Y \longrightarrow X^*$. For every fixed $v\in Y$ operator $F(\cdot,v)$ is monotone and radially continuous, while $F(u,\cdot)$ is continuous for every $u\in X$. Moreover there exists a function $\gamma\colon [0,\infty)^2\longrightarrow \mathbb{R}$ such that
	\begin{equation*}
		\langle F(u,v),u\rangle \geq \gamma(\|u\|,\|v\|)\quad \text{for all }u\in X\text{ and }v\in Y
	\end{equation*}
	and that 
	\begin{equation*}
		\lim_{x\to \infty}\gamma(x,y) = \infty
	\end{equation*}
	uniformly with respect to $y$ on every bounded interval.
\end{assumption}
\begin{assumption}
	\label{AssumptionH3}
	Operator $G\colon X\times Y\longrightarrow Y$ is compact, that is it maps bounded subsets of $X\times Y$ onto relatively compact subsets of $Y$, and continuous in the following way
	\begin{equation}
        \label{ContinuityCondition}
		\left.
		\begin{array}{r}
			u_n\rightharpoonup u \text{ in }X \\
			v_n \to v \text{ in }Y
		\end{array}
		\right\}\implies G(u_n,v_n) \to G(u,v)\text{ in }Y.
	\end{equation}
\end{assumption}
Let us recall after \cite{BeldzGalKos} the parametric version of the Browder-Minty theorem which we will use as an auxiliary tool for the perturbed problem in the proof of the main result.

\begin{proposition}
	\label{PropostionPreviouPaper}
	Assume that $Y$ is a metric space, $X$ is a reflexive Banach space. If $A\colon X\times Y \longrightarrow X^*$ is an operator such that:
	\begin{itemize}
		\item $A(\cdot,y)\colon X \longrightarrow X^*$ is radially continuous and strictly monotone for all $y\in Y$,
            \item $A(u,\cdot)\colon Y \longrightarrow X^*$ is continuous for every $u\in X$;
		\item for every $y_0\in Y$ there exists an open neighbourhood $V$ of $y_0$ and a coercive function $\textcolor{red}{\rho}\colon [0,\infty) \longrightarrow \mathbb{R}$ such that 
		\begin{equation*}
			\langle A(u,y), u\rangle \geq \textcolor{red}{\rho}(\|u\|)\|u\|\quad \text{for all }y\in V\text{ and }u\in X,
		\end{equation*}
	\end{itemize}
	then for every $y\in Y$ there exists a unique $u_y$ such that $A(u_y,y) = 0$. Moreover $y_n\to y$ in $Y$ implies $u_{y_n} \rightharpoonup u_y$ in $X$.
\end{proposition}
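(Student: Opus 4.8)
The plan is to split the statement into its two assertions—pointwise existence/uniqueness, and the continuous dependence—and to spend almost all of the effort on the latter, since the former is a direct appeal to the Browder--Minty theorem.

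For a fixed $y\in Y$, I would invoke the coercivity hypothesis to obtain a neighbourhood $V\ni y$ and a coercive function $\rho$. Restricted to the slice $A(\cdot,y)\colon X\to X^*$, the operator is radially continuous, strictly monotone (hence monotone) and coercive, because $\langle A(u,y),u\rangle\ge \rho(\|u\|)\|u\|$. Recalling from the preliminaries that radial continuity and monotonicity together give demicontinuity, these are exactly the hypotheses of the Browder--Minty theorem on the reflexive space $X$, so $A(\cdot,y)$ is surjective and in particular there is $u_y$ with $A(u_y,y)=0$. Uniqueness follows from strict monotonicity: if $A(u_1,y)=A(u_2,y)=0$ with $u_1\ne u_2$, then $0=\langle A(u_1,y)-A(u_2,y),u_1-u_2\rangle>0$, which is absurd.

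For the continuous dependence, let $y_n\to y$ in $Y$ and write $u_n:=u_{y_n}$. First I would establish boundedness of $\{u_n\}$. Taking the neighbourhood $V$ and coercive $\rho$ attached to $y$, we have $y_n\in V$ for $n$ large, and since $A(u_n,y_n)=0$,
\[
0=\langle A(u_n,y_n),u_n\rangle \ge \rho(\|u_n\|)\,\|u_n\|.
\]
As $\rho(t)\to\infty$, the product $\rho(t)\,t$ is positive for all large $t$, so $\|u_n\|$ must remain bounded. By reflexivity of $X$ every subsequence then admits a further weakly convergent subsequence, and by the standard subsequence principle for weak convergence it suffices to show that every weak limit point of $\{u_n\}$ coincides with $u_y$, which upgrades to $u_n\rightharpoonup u_y$ for the whole sequence.

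So suppose, after relabelling, that $u_n\rightharpoonup \overline u$. The decisive step is Minty's trick adapted to the moving parameter. For arbitrary $w\in X$, monotonicity of $A(\cdot,y_n)$ together with $A(u_n,y_n)=0$ yields $\langle A(w,y_n),u_n-w\rangle\le 0$. Here I would pass to the limit using that $A(w,y_n)\to A(w,y)$ strongly in $X^*$ by continuity of $A(w,\cdot)$, tested against $u_n-w\rightharpoonup \overline u-w$, so the pairing converges; this gives $\langle A(w,y),\overline u-w\rangle\le 0$ for every $w\in X$. Substituting $w=\overline u+tz$ with $t>0$ and $z\in X$, dividing by $t$, and letting $t\to 0^+$ with radial continuity of $A(\cdot,y)$ produces $\langle A(\overline u,y),z\rangle\ge 0$; replacing $z$ by $-z$ forces $A(\overline u,y)=0$, whence $\overline u=u_y$ by uniqueness. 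I expect the main obstacle to be precisely this limit passage in $\langle A(w,y_n),u_n-w\rangle$: it is what compels the hypothesis that $A(u,\cdot)$ be continuous, since only weak convergence of $u_n$ is available and strong convergence of the functional is needed to close the argument, and it is exactly where the parametric character of the problem goes beyond the classical Browder--Minty proof.
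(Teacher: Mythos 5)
Your proof is correct, but note that the paper itself does not prove Proposition \ref{PropostionPreviouPaper}: it is recalled, without proof, from the authors' earlier work \cite{BeldzGalKos}, so there is no in-paper argument to compare against line by line. Your reconstruction is the natural one and every step checks out: Browder--Minty applied to the slice $A(\cdot,y)$ (with radial continuity plus monotonicity upgrading to demicontinuity, as the preliminaries note), strict monotonicity for uniqueness, the estimate $0=\langle A(u_n,y_n),u_n\rangle\geq \rho(\|u_n\|)\,\|u_n\|$ valid for large $n$ (once $y_n\in V$) to bound $(u_n)$, the subsequence principle to reduce to identifying weak limit points, and the parametric Minty trick to show any weak limit $\overline{u}$ satisfies $A(\overline{u},y)=0$. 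You also correctly isolate the one delicate limit, $\langle A(w,y_n),u_n-w\rangle\to\langle A(w,y),\overline{u}-w\rangle$, which holds because $A(w,y_n)\to A(w,y)$ strongly in $X^*$ while $u_n-w$ is bounded and weakly convergent; this is precisely where the hypothesis on $A(u,\cdot)$ enters. It is worth observing that your key device is the same one the paper does write out in the proof of Theorem \ref{TheoremNonLinearEigenvalue}: there the roles of $A(u_n,y_n)=0$ and $A(w,y_n)\to A(w,y)$ are played by $F(u_n,v_n)=-\frac{1}{n}J(u_n)\to 0$ and $F(\nu_t,v_n)\to F(\nu_t,v)$, with the monotonicity inequality and the passage $t\to 0^+$ closing the argument in the same way, so your proof is fully consistent in method with the paper.
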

Now we are in a position to give a first result which has rather a theoretical character. 

\begin{theorem}
	\label{TheoremNonLinearEigenvalue}
	Let Assumptions \ref{AssumptionH1}, \ref{AssumptionH2} and \ref{AssumptionH3} hold. Assume that there exists a bounded and convex set $C\subset Y$ such that $G(X\times C) \subset C$. Then there exists at least one solution to \eqref{MainSystem}.
\end{theorem}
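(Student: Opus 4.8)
The plan is to regularize the first equation so that the parametric Browder--Minty theorem (Proposition \ref{PropostionPreviouPaper}) becomes applicable, to solve the regularized system by a Schauder argument on the invariant set $C$, and finally to remove the regularization by a monotonicity (Minty) limiting argument. The point of the regularization is that $F(\cdot,v)$ is only assumed monotone, so that neither uniqueness for the first equation nor the strict-monotonicity hypothesis of Proposition \ref{PropostionPreviouPaper} is available directly.

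Fix $\varepsilon > 0$ and let $J\colon X\to X^*$ be the duality map furnished by Proposition \ref{LemmaDuality}. I would consider $F_\varepsilon(u,v) := F(u,v) + \varepsilon J(u)$. For each fixed $v$ the operator $F_\varepsilon(\cdot,v)$ is strictly monotone (monotone plus strictly monotone) and radially continuous (the demicontinuity of $J$ yields its radial continuity), while $F_\varepsilon(u,\cdot)$ inherits continuity from $F$. On the bounded set $\overline{C}$ the term $\varepsilon\langle J(u),u\rangle = \varepsilon\|u\|^2$ combined with Assumption \ref{AssumptionH2} gives a coercivity estimate $\langle F_\varepsilon(u,v),u\rangle \geq \rho_\varepsilon(\|u\|)\|u\|$ with $\rho_\varepsilon$ coercive, uniformly for $v\in\overline{C}$. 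Hence Proposition \ref{PropostionPreviouPaper}, applied with parameter space $\overline{C}$, produces for each $v\in\overline{C}$ a unique $u_\varepsilon(v)\in X$ with $F_\varepsilon(u_\varepsilon(v),v) = 0$, and moreover $v_n\to v$ implies $u_\varepsilon(v_n)\rightharpoonup u_\varepsilon(v)$.

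Next I would set $T_\varepsilon(v) := G(u_\varepsilon(v),v)$ and seek a fixed point of $T_\varepsilon$ in $\overline{C}$. Testing $F_\varepsilon(u_\varepsilon(v),v)=0$ with $u_\varepsilon(v)$ and using coercivity yields a bound $\|u_\varepsilon(v)\|\leq R$ independent of $v\in\overline{C}$, so $\{(u_\varepsilon(v),v):v\in\overline{C}\}$ is bounded; compactness of $G$ then makes $T_\varepsilon(\overline{C})$ relatively compact, and $G(X\times C)\subset C$ together with the closedness and convexity of $\overline{C}$ keeps $\overline{C}$ invariant. Continuity of $T_\varepsilon$ follows from the weak continuity $u_\varepsilon(v_n)\rightharpoonup u_\varepsilon(v)$ coupled with the continuity condition \eqref{ContinuityCondition}. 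Schauder's theorem then yields $v_\varepsilon\in\overline{C}$ with $G(u_\varepsilon,v_\varepsilon)=v_\varepsilon$ and $F(u_\varepsilon,v_\varepsilon)+\varepsilon J(u_\varepsilon)=0$, where $u_\varepsilon:=u_\varepsilon(v_\varepsilon)$.

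Finally I would let $\varepsilon = \varepsilon_n\to 0$. Since the bound $\|u_{\varepsilon_n}\|\le R$ is uniform in $n$, by reflexivity a subsequence satisfies $u_{\varepsilon_n}\rightharpoonup u$, while $v_{\varepsilon_n}=G(u_{\varepsilon_n},v_{\varepsilon_n})$ lies in a relatively compact set, so $v_{\varepsilon_n}\to v\in\overline{C}$ along a further subsequence; the condition \eqref{ContinuityCondition} immediately gives $G(u,v)=v$. For the first equation, \textbf{the main obstacle}, the regularized identity gives $F(u_{\varepsilon_n},v_{\varepsilon_n})=-\varepsilon_n J(u_{\varepsilon_n})\to 0$ in $X^*$ and $\langle F(u_{\varepsilon_n},v_{\varepsilon_n}),u_{\varepsilon_n}\rangle=-\varepsilon_n\|u_{\varepsilon_n}\|^2\to 0$; combining these with the monotonicity of $F(\cdot,v_{\varepsilon_n})$, the continuity of $F(w,\cdot)$, and the weak convergence $u_{\varepsilon_n}\rightharpoonup u$ yields $\langle F(w,v),w-u\rangle\ge 0$ for every $w\in X$, and the Minty trick $w=u+tz$, $t\to 0^+$, with the radial continuity of $F(\cdot,v)$, gives $F(u,v)=0$. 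The delicate points are the $\varepsilon$-uniform a priori bound and the careful bookkeeping of strong versus weak convergence in this last passage, which is exactly where the compactness of $G$ and the mixed continuity \eqref{ContinuityCondition} are indispensable.
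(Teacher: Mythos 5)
Your proposal is correct and follows essentially the same route as the paper's own proof: regularization $F + \varepsilon J$ via the duality map of Proposition \ref{LemmaDuality} (the paper uses $\frac{1}{n}J$), the parametric Browder--Minty result (Proposition \ref{PropostionPreviouPaper}) to solve the perturbed first equation, Schauder's theorem on the invariant set for $v\mapsto G(u_\varepsilon(v),v)$, the $\varepsilon$-independent a priori bound from $\gamma(\|u_\varepsilon\|,\|v\|)\le 0$, and finally the Minty monotonicity trick with radial continuity to pass to the limit in the first equation. The only (cosmetic) differences are that you run Schauder on $\overline{C}$ rather than $C$ and phrase the Minty argument with $w=u+tz$ instead of the paper's $\nu_t=u-tw$.
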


\begin{proof}
    Let $J\colon X \longrightarrow X^*$ be an operator satisfying assertion of Proposition \ref{LemmaDuality}. For fixed $n\in \mathbb{N}$ we define operator $F_n\colon X\times Y \longrightarrow X^*$ by the formula 
	\begin{equation}
		\label{FnProof}
		F_n(u,v) = F(u,v) + \tfrac{1}{n}J(u).
	\end{equation}
	Then $F_n$ satisfies all assumptions of Proposition \ref{PropostionPreviouPaper}. It follows by the following inequality 
    \begin{equation*}
        \langle F_n(u,v),u\rangle = \langle F(u,v),u\rangle + \tfrac{1}{n}\langle J(u), u \rangle \geq \gamma(\|u\|,\|v\|) + \tfrac{1}{n}\|u\|^2\quad \text{for all }u\in X\text{ and }v\in Y.
    \end{equation*}
    Consequently for each $v_0\in Y$ is is sufficient to take $V = \{v\in X : \|v - v_0\| < 1$ and $\rho(x,y) = \frac{\gamma(x,y)}{x + 1} + \frac{1}{n}x$. Therefore for each $v\in Y$ there is a unique $S_n(v)$ satisfying $F_n(S_n(v),v) = 0$. Moreover $S_n\colon  Y \longrightarrow X$ is continuous in the following sense
	\begin{equation*}
		v_m \to v \text{ in }Y\implies S_n(v_m)\rightharpoonup S_n(v)\text{ in }X.
	\end{equation*}
	Notice that we have $\gamma(\|S_n(v)\|,\|v\|)\leq 0$ for every $v\in Y$ and all $n\in\mathbb{N}$. Therefore the set ${K := \{S_n(v) : n\in \mathbb{N}, v \in C\}}$ is bounded. Define $G_n\colon C\longrightarrow C$ by $G_n(v) = G(S_n(v),v)$. Applying the Schauder Fixed Point Theorem we obtain that there exists at least one fixed point $v_n$ of $G_n$. Denote $u_n := S_n(v_n)$. Then
	\begin{equation}
		\label{AuxiliarySystem}
		\left\{
		\begin{array}{l}
			F(u_n,v_n) = -\frac{1}{n}J(u_n), \\
			G(u_n,v_n) = v_n.
		\end{array}
		\right.
	\end{equation}
    Since ${(v_n)_n = \big( G(u_n,v_n)\big)_n}$ we have that it is compact by Assumption \ref{AssumptionH3}. Moreover (again up to subsequence) $u_n \rightharpoonup u$ since $(u_n) \subset K$. Fix $w\in X$ and let ${\nu_t = u - tw}$, $t > 0$. Then
	\begin{equation*}
		\langle F(u_n,v_n) - F(\nu_t, v_n), u_n - u\rangle + t\langle F(u_n,v_n), w\rangle > t\langle F(\nu_t,v_n), w\rangle .
	\end{equation*}
	Since $F(u_n,v_n)\to 0$, $F(\nu_t,v_n) \to F(\nu_t, v)$ and $u_n \rightharpoonup u$,
	\begin{equation*}
		\lim_{n\to \infty} \langle F(u_n,v_n) - F(\nu_t, v_n), u_n - u\rangle = 0.
	\end{equation*}
	Hence we get that for every $t>0$ there is
	\begin{equation*}
		0 = \lim_{n\to \infty} \langle F(u_n,v_n), w\rangle \geq \lim_{n\to \infty} \langle F(\nu_t,v_n), w\rangle = \langle F(\nu_t, v), w\rangle.
	\end{equation*}
	Letting $t\to 0$ we obtain
	\begin{equation*}
		0 \geq \langle F(u,v), w\rangle\quad \text{for every }w\in X.
	\end{equation*}
	Hence $F(u,v) = 0$. Now, since $u_n \rightharpoonup u$ and $v_n \to v$ we also get ${G(u_n,v_n) \to G(u,v)}$. Therefore $(u,v)$ solves \eqref{MainSystem}.
\end{proof}

\begin{proposition}
    \label{PropositionPreviousPaper2}
    Assume that $Y$ is a metric space, $X$ is a reflexive Banach space. If $F \colon X \times Y\longrightarrow X^*$ is an operator such that:
    \begin{itemize}
        \item $F(\cdot,y)$ is radially continuous for all $y\in Y$;
        \item $F(u,\cdot)$ is continuous for every $u\in X$;
	\item there exists a constant $m>0$ such that 
 \begin{equation}
    \label{ConditionParametricStronglyMonotone}
     \langle F(u,y) - F(w,y), u - w \rangle \geq m\|u - w\|^2 \quad \text{for all }y\in Y\text{ and }u,w\in X,
 \end{equation}
	\end{itemize}
	then for every $y\in Y$ there exists a unique $u_y$ such that $F(u_y,y) = 0$. Moreover $y_n\to y$ in $Y$ implies $u_{y_n} \to u_y$ in $X$.
\end{proposition}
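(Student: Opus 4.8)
The plan is to deduce existence and uniqueness from Proposition \ref{PropostionPreviouPaper} and then to upgrade the resulting weak parametric continuity to genuine norm continuity by exploiting the quadratic lower bound \eqref{ConditionParametricStronglyMonotone} directly. The point is that strong monotonicity is far more than the strict monotonicity and local coercivity required by Proposition \ref{PropostionPreviouPaper}, and the surplus is exactly what forces strong convergence.

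First I would verify that $F$ meets the hypotheses of Proposition \ref{PropostionPreviouPaper}. Condition \eqref{ConditionParametricStronglyMonotone} immediately yields strict monotonicity of $F(\cdot,y)$, since for distinct $u,w$ we have $\langle F(u,y) - F(w,y), u-w\rangle \geq m\|u-w\|^2 > 0$; radial continuity of $F(\cdot,y)$ and continuity of $F(u,\cdot)$ are assumed. For the local coercivity I would fix $y_0\in Y$ and use that $F(0,\cdot)$ is continuous to find an open neighbourhood $V$ of $y_0$ and a constant $M>0$ with $\|F(0,y)\|_* \leq M$ for all $y\in V$. Taking $w=0$ in \eqref{ConditionParametricStronglyMonotone} then gives
\begin{equation*}
    \langle F(u,y), u\rangle \geq m\|u\|^2 + \langle F(0,y), u\rangle \geq m\|u\|^2 - M\|u\| = (m\|u\| - M)\|u\|
\end{equation*}
for every $u\in X$ and $y\in V$, so the coercivity hypothesis holds with $\rho(x) = mx - M$. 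Proposition \ref{PropostionPreviouPaper} therefore provides, for each $y\in Y$, a unique $u_y\in X$ with $F(u_y,y)=0$.

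The decisive step is the strong convergence. Given $y_n\to y$ in $Y$, write $u_n := u_{y_n}$. Using $F(u_n,y_n) = 0 = F(u_y,y)$ together with \eqref{ConditionParametricStronglyMonotone} at the parameter $y_n$, I would estimate
\begin{align*}
    m\|u_n - u_y\|^2 &\leq \langle F(u_n,y_n) - F(u_y,y_n), u_n - u_y\rangle \\
    &= -\langle F(u_y,y_n), u_n - u_y\rangle \\
    &= -\langle F(u_y,y_n) - F(u_y,y), u_n - u_y\rangle \\
    &\leq \|F(u_y,y_n) - F(u_y,y)\|_*\,\|u_n - u_y\|,
\end{align*}
where the middle equality uses $F(u_y,y)=0$. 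Dividing by $\|u_n - u_y\|$ (the conclusion being trivial otherwise) yields $m\|u_n - u_y\| \leq \|F(u_y,y_n) - F(u_y,y)\|_*$.

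Finally, the continuity of $F(u_y,\cdot)$ gives $F(u_y,y_n)\to F(u_y,y)$ in $X^*$, so the right-hand side tends to zero and hence $u_n\to u_y$ in $X$. I expect the only delicate point to be the verification of local coercivity, which rests on the continuity of $y\mapsto F(0,y)$ to bound $\|F(0,y)\|_*$ uniformly near $y_0$; once existence and uniqueness are in hand, strong monotonicity makes the norm convergence essentially automatic and, notably, does not even require passing through the weak convergence furnished by Proposition \ref{PropostionPreviouPaper}.
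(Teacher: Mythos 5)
Your proof is correct. Note that the paper itself offers no proof of this proposition: as its label suggests, it is recalled from the earlier work \cite{BeldzGalKos}, so there is no internal argument to compare yours against, and what you wrote serves as a valid self-contained justification. Both halves check out. The local coercivity needed for Proposition \ref{PropostionPreviouPaper} is correctly extracted from \eqref{ConditionParametricStronglyMonotone} with $w=0$, using continuity of $F(0,\cdot)$ to get a uniform bound $M$ on $\|F(0,y)\|_*$ near $y_0$ and taking $\rho(x)=mx-M$, which is indeed coercive. The continuity upgrade is the standard strong-monotonicity device: from $F(u_{y_n},y_n)=0=F(u_y,y)$ you get
\begin{equation*}
    m\|u_{y_n}-u_y\|^2 \leq -\langle F(u_y,y_n)-F(u_y,y),\,u_{y_n}-u_y\rangle \leq \|F(u_y,y_n)-F(u_y,y)\|_*\,\|u_{y_n}-u_y\|,
\end{equation*}
and continuity of $F(u_y,\cdot)$ finishes the argument. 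Your closing observation is also accurate and worth keeping: the norm convergence comes entirely from \eqref{ConditionParametricStronglyMonotone} and existence/uniqueness, so the weak convergence conclusion of Proposition \ref{PropostionPreviouPaper} is never needed; one could equally well invoke the classical (non-parametric) Browder--Minty theorem for each fixed $y$ and obtain the same result.
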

Consequently we obtain a following version of Theorem \ref{TheoremNonLinearEigenvalue}.
\begin{proposition}
    \label{PropositionNonLinearEigenvalue2}
    Let Assumption \ref{AssumptionH1} holds. Assume that:
    \begin{itemize}
        \item $F\colon X\times Y \longrightarrow X^*$ is an operator such that:
        \begin{itemize}
            \item $F(\cdot,v)$ is radially continuous for every $v\in Y$,
            \item $F(u,\cdot)$ is continuous for every $u\in X$,
            \item for every $r > 0$ we have $\sup_{\|y\|\leq r}\|F(0,y)\| < \infty$,
            \item there exists $m>0$ such that \eqref{ConditionParametricStronglyMonotone} holds,
        \end{itemize}
        \item $G\colon X \times Y \longrightarrow Y$ is continuous and compact.
    \end{itemize}
    If there exists a bounded and convex set $C\subset Y$ such that $G(X\times C) \subset C$, then there exists at least one solution to \eqref{MainSystem}.
\end{proposition}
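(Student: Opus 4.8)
The plan is to mirror the proof of Theorem~\ref{TheoremNonLinearEigenvalue}, but with the strong monotonicity hypothesis \eqref{ConditionParametricStronglyMonotone} doing the work that there required the duality-mapping regularization $F_n = F + \tfrac1n J$. Concretely, I would use Proposition~\ref{PropositionPreviousPaper2} to solve the first equation of \eqref{MainSystem} uniquely for each parameter $v$, thereby producing a \emph{continuous} solution operator $S\colon Y \longrightarrow X$, and then reduce the whole system to the single fixed-point equation $v = G(S(v),v)$ on $C$, which I would resolve by the Schauder Fixed Point Theorem. The gain over Theorem~\ref{TheoremNonLinearEigenvalue} is twofold: no approximating sequence and no Minty-type passage to the limit are needed, since $S$ is available directly; and the continuous dependence furnished by Proposition~\ref{PropositionPreviousPaper2} is \emph{strong} ($v_n \to v \Rightarrow S(v_n) \to S(v)$), so only ordinary joint continuity of $G$ is required rather than the weak-strong condition \eqref{ContinuityCondition}.

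First I would check that $F$ meets the hypotheses of Proposition~\ref{PropositionPreviousPaper2}: radial continuity of $F(\cdot,v)$, continuity of $F(u,\cdot)$, and \eqref{ConditionParametricStronglyMonotone} are all assumed. This yields, for every $v \in Y$, a unique $S(v) \in X$ with $F(S(v),v) = 0$, and the map $S$ satisfies $v_n \to v$ in $Y \Rightarrow S(v_n) \to S(v)$ in $X$.

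The one quantitative point to establish is the boundedness of $K := S(C)$, which is what will make the image of the fixed-point map relatively compact. Putting $w = 0$ in \eqref{ConditionParametricStronglyMonotone} and using $F(S(v),v) = 0$, I would estimate
\begin{equation*}
    0 = \langle F(S(v),v), S(v)\rangle \geq m\|S(v)\|^2 + \langle F(0,v), S(v)\rangle \geq m\|S(v)\|^2 - \|F(0,v)\|\,\|S(v)\|,
\end{equation*}
whence $\|S(v)\| \leq \tfrac1m \|F(0,v)\|$. Since $C$ is bounded, say $C \subset \{\,\|y\| \leq r\,\}$, the hypothesis $\sup_{\|y\|\leq r}\|F(0,y)\| < \infty$ gives the uniform bound $\|S(v)\| \leq \tfrac1m \sup_{\|y\|\leq r}\|F(0,y)\|$ for all $v \in C$, so $K$ is bounded.

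Finally I would define $\Phi\colon \overline{C} \longrightarrow \overline{C}$ by $\Phi(v) = G(S(v),v)$. It maps $\overline{C}$ into itself because $G(X \times C) \subset C$ extends by continuity of $G$ to $G(X\times\overline{C})\subset\overline{C}$; it is continuous as a composition of the continuous maps $v \mapsto (S(v),v)$ and $G$; and its image lies in $G(K\times \overline{C})$, which is relatively compact since $K\times\overline{C}$ is bounded and $G$ is compact. The Schauder Fixed Point Theorem then yields $v^\ast \in \overline{C}$ with $G(S(v^\ast),v^\ast) = v^\ast$; setting $u^\ast := S(v^\ast)$ gives $F(u^\ast,v^\ast) = 0$ and $G(u^\ast,v^\ast) = v^\ast$, i.e. $(u^\ast,v^\ast)$ solves \eqref{MainSystem}. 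I expect no serious obstacle here: essentially all the analytic difficulty is absorbed into Proposition~\ref{PropositionPreviousPaper2}, and the only points needing care are the boundedness estimate above and the mild technicality of applying Schauder on $\overline{C}$ rather than on the possibly non-closed set $C$.
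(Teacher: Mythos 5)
Your proposal is correct and follows essentially the same route as the paper's own (sketched) proof: apply Proposition~\ref{PropositionPreviousPaper2} directly to $F$ to obtain the continuous solution map $S$, bound $K = S(C)$ via the strong monotonicity estimate $\|S(v)\| \leq \tfrac{1}{m}\|F(0,v)\|$ together with $\sup_{\|y\|\leq r}\|F(0,y)\| < \infty$, and apply the Schauder Fixed Point Theorem to $v \longmapsto G(S(v),v)$. Your additional care in working on $\overline{C}$ (since $C$ is only assumed bounded and convex, not closed) is a minor refinement of a point the paper's sketch glosses over.
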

\begin{proof}[Sketch of the proof]
    While we argue analogously as in the proof of Theorem \ref{TheoremNonLinearEigenvalue} we do not need to perturb $F$ by $\frac{1}{n}J$. This is because of the monotonicity properties of operator $F(\cdot,v)$ for all $v\in Y$.  Applying Proposition \ref{PropositionPreviousPaper2} directly to $F$ we obtain a continuous mapping $S\colon Y \longrightarrow X$ satisfying $F(S(v),v) = 0$ for all $v\in Y$. Condition \eqref{ConditionParametricStronglyMonotone} yields 
    \begin{equation*}
        0 = \langle F(S(v),v),S(v)\rangle \geq m\|S(v)\|^2 - \|F(0,v)\|\|S(v)\|.
    \end{equation*}
    Consequently $\|F(0,v)\| \geq m\|S(v)\|$ for all $v\in K$ and hence set $K:= \{S(v) : v\in C\}$ is bounded. Applying the Schauder Fixed Point Theorem to $\widetilde{G}\colon C \longrightarrow C$ given by $\widetilde{G}(v) = G(S(v),v)$ we get the assertion.
\end{proof}

\begin{remark}
    \label{RemarkAfterFirstTheorem}
    In the above arguments, the Schauder Theorem can be replaced by the Scheafer Theorem. This would require some relevant change of the assumptions but the main spirit will be retained, see \cite{PrecupSchaefer2}.
\end{remark}

\begin{remark}
	To consider problem 
	\begin{equation*}
		\left\{
		\begin{array}{l}
			F(u,v) = f, \\
			G(u,v) = v 
		\end{array}
		\right.
	\end{equation*}
	for a fixed  $f\in X^*$ it is sufficient to replace $F$ by $\widetilde{F}(u,v) = F(u,v) - f$ and take ${\widetilde{\gamma}(x,y) = \gamma(x,y) - \|f\|_*x}$ since
	\begin{equation*}
		\langle \widetilde{F}(u,v),u\rangle = \langle F(u,v),u\rangle - \langle f,u\rangle \geq \gamma(\|u\|,\|v\|) - \|f\|_*\|u\| = \widetilde{\gamma}(\|u\|,\|v\|).
	\end{equation*}
\end{remark}

Notice that assumption $G(X\times C)\subset C$ is very restrictive. For instance, if $G\colon C[0,1]\times C[0,1]\longrightarrow C[0,1]$ is an integral operator associated with equation
\begin{equation*}
	\left\{
	\begin{array}{ll}
		-\ddot{v} = g(t,u,v) & \text{for }t\in (0,1),\\
		v(0) = v(1) = 0, &
	\end{array}
	\right.
\end{equation*}
then a natural assumption providing $G(X\times C)\subset C$ for some bounded set $C$ is a uniform boundedness of $g$ with respect to the first coordinate. An inspiration for a more applicable condition will be found in the Krasnoselskii Theorem.

\subsection{Relations to the Krasnoselskii fixed point theorem}

It is shown in \cite{PrecupFixed} that Theorem \ref{TheoOFAvramescu} implies the following fixed point of Krasnoselskii which glues together the Banach and the Schauder Fixed Point Theorems:

\begin{theorem}
	Let $D$ be a closed bounded convex subset of a Banach space $X$, $A:D\longrightarrow X$ a contraction and $B:D\longrightarrow X$ a continuous mapping with $B(D)$ relatively compact. If \begin{equation*}
		A(x)+B(y)\in D\quad\text{for all }x,y\in D
	\end{equation*}
	then the mapping $A+B$ has at least one fixed point.
\end{theorem}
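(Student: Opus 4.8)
The plan is to deduce the statement directly from Avramescu's Theorem \ref{TheoOFAvramescu}, choosing the two metric spaces and the two mappings so that a \emph{simultaneous} fixed point encodes a fixed point of $A+B$. First I would take $D_1 := D$, regarded as a complete metric space under the metric inherited from the norm of $X$ (it is complete, being a closed subset of a Banach space). For the second space I would compactify the range of the compact part: set $D_2 := \overline{\operatorname{conv}}\,B(D)$. Since $B(D)$ is relatively compact, Mazur's theorem guarantees that its closed convex hull $D_2$ is a compact, hence closed, convex subset of the normed space $X$, so it is admissible as the $D_2$ of Theorem \ref{TheoOFAvramescu}.

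Next I would define the two mappings by
\[
N_1(x,y) := A(x) + y, \qquad N_2(x,y) := B(x), \qquad (x,y) \in D_1 \times D_2.
\]
Both are continuous: $N_1$ because $A$ is a contraction and addition is continuous, $N_2$ because $B$ is continuous. The contraction condition (a) holds with the contraction constant $L$ of $A$, since $d\big(N_1(x,y),N_1(\bar x,y)\big) = \|A(x)-A(\bar x)\| \le L\|x-\bar x\|$, the variable $y$ cancelling. The compactness condition (b) is immediate from $N_2(D_1\times D_2) = B(D)$, which is relatively compact by hypothesis, and $N_2$ clearly takes values in $D_2$ because $B(x)\in B(D)\subseteq D_2$.

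The one point that needs genuine care --- and the step I expect to be the main obstacle --- is verifying that $N_1$ actually maps into $D_1 = D$ when $y$ is allowed to range over the \emph{whole} convex hull $D_2$, not merely over $B(D)$. The hypothesis only supplies $A(x)+B(z)\in D$ for $x,z\in D$. To upgrade this, fix $x\in D$; then $B(D)\subseteq D - A(x)$, and the set $D - A(x)$ is closed and convex because $D$ is. Hence $D_2 = \overline{\operatorname{conv}}\,B(D) \subseteq D - A(x)$, which is exactly the assertion $A(x)+y\in D$ for every $y\in D_2$, so $N_1$ does map $D_1\times D_2$ into $D_1$.

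With all hypotheses of Theorem \ref{TheoOFAvramescu} in place, I would invoke it to obtain a pair $(x,y)\in D_1\times D_2$ with $N_1(x,y)=x$ and $N_2(x,y)=y$, that is $A(x)+y = x$ and $B(x)=y$. Eliminating $y$ yields $A(x)+B(x)=x$, so $x$ is the sought fixed point of $A+B$, which finishes the argument.
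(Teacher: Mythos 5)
Your proof is correct and follows precisely the route the paper indicates: the paper presents this Krasnoselskii theorem as a consequence of Theorem \ref{TheoOFAvramescu} (deferring the details to \cite{PrecupFixed}), and your construction --- $D_1 = D$, $D_2 = \overline{\operatorname{conv}}\,B(D)$, $N_1(x,y) = A(x)+y$, $N_2(x,y) = B(x)$ --- supplies a valid such derivation. In particular, you correctly identify and resolve the one delicate point, namely upgrading the invariance hypothesis from $B(D)$ to its closed convex hull via the closedness and convexity of $D - A(x)$ (note only closedness and convexity of $D_2$ are required by Theorem \ref{TheoOFAvramescu}, so the appeal to Mazur's theorem for compactness, while harmless, is not needed).
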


There are extensions of the Krasnosel'skii theorem in several directions, among which we may mention those pertaining to replacing $B$ with some injection or else to the usage of the Schaefer fixed point theorem. Both would lead to suitable modification of the assumptions, see for example \cite{avramescuEJQTDE}, \cite{AvramVlad} for the relevant results.
\newline
\par
Our version of the Krasnoselskii fixed point theorem, provided below, is connected to the usage of the Strongly Monotone Principle instead of the Banach contraction and uses ideas employed in \cite{burton}. We say that $A \colon H \longrightarrow H$, where $H$ is a real Hilbert space, is \emph{one-sided contraction} if there exists $m < 1$ such that 
\begin{equation}
	\label{OneSidedContraction}
	\langle A(u) - A(w), u - w\rangle \leq m\|u - w\|^2\quad \text{for all }u,w\in H.
\end{equation} 
Now some version of the fixed point theorem dependent on a numerical parameter follows.

\begin{theorem}
    \label{theo_on_a_ball}
    Assume that $A\colon H \longrightarrow H$ is a radially continuous one-sided contraction and ${B\colon H \longrightarrow H}$ is continuous and compact. Then there exists $\lambda_0>0$ such that for all $\lambda \in [0,\lambda_0]$ the mapping ${u\longmapsto A\left( u\right) -\lambda B\left( u\right)}$ has a fixed point, or in other words, equation 
	\begin{equation}
		\label{ProblemEigenvalue}
		A\left( u\right) =\lambda B\left( u\right) +u
	\end{equation}
	has a solution.
\end{theorem}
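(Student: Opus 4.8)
The plan is to exploit the one-sided contraction condition \eqref{OneSidedContraction} to turn $A$ into a strongly monotone operator, invert it by the Strongly Monotone Principle, and then close the argument with the Schauder Theorem, in the same spirit as the previous results. Identifying $H$ with its dual, set $F := I_H - A$, where $I_H$ is the identity. Condition \eqref{OneSidedContraction} gives, for all $u,w \in H$,
\[
\langle F(u) - F(w), u - w\rangle = \|u-w\|^2 - \langle A(u) - A(w), u-w\rangle \ge (1-m)\|u-w\|^2,
\]
so $F$ is strongly monotone with constant $1-m > 0$, and it is radially continuous because $A$ is. Since $A(u) = \lambda B(u) + u$ is equivalent to $F(u) = -\lambda B(u)$, I would freeze the argument of $B$ at a parameter $v$ and study the auxiliary equation $F(u) = -\lambda B(v)$.

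For each fixed $\lambda \ge 0$ the map $\mathcal{F}(u,v) := F(u) + \lambda B(v)$ satisfies the hypotheses of Proposition \ref{PropositionPreviousPaper2} with $X = Y = H$: it is radially continuous in $u$, continuous in $v$, and strongly monotone in $u$ with constant $1-m$ by \eqref{ConditionParametricStronglyMonotone}. Hence there is a unique $S_\lambda(v) \in H$ with $F(S_\lambda(v)) = -\lambda B(v)$, depending continuously on $v$. The key quantitative fact I would extract from strong monotonicity is that $F^{-1}$ is Lipschitz: Cauchy--Schwarz applied to the displayed inequality gives $\|F(u) - F(w)\| \ge (1-m)\|u-w\|$, hence $\|F^{-1}(f) - F^{-1}(g)\| \le \frac{1}{1-m}\|f-g\|$. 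Writing $S_\lambda = F^{-1}\circ(-\lambda B)$, continuity of $S_\lambda$ is immediate, and compactness follows because $B$ maps bounded sets into relatively compact ones while $F^{-1}$, being continuous, preserves relative compactness.

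It remains to produce an invariant ball so that Schauder applies. Let $u_0 := F^{-1}(0)$, the unique fixed point of $A$. From the Lipschitz estimate, $\|S_\lambda(v)\| \le \|u_0\| + \frac{\lambda}{1-m}\|B(v)\|$. I would fix $R := \|u_0\| + 1$, set $M_R := \sup_{\|v\|\le R}\|B(v)\| < \infty$ (finite since $B$ is compact), and choose $\lambda_0 := (1-m)/M_R$ (any $\lambda_0 > 0$ if $M_R = 0$). Then for every $\lambda \in [0,\lambda_0]$ and every $v$ with $\|v\| \le R$ one gets $\|S_\lambda(v)\| \le \|u_0\| + \lambda_0 M_R/(1-m) = R$, so $S_\lambda$ maps the closed ball $\overline{B}(0,R)$ into itself.

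Finally, for each $\lambda \in [0,\lambda_0]$ the map $S_\lambda$ is continuous and compact and sends the bounded closed convex set $\overline{B}(0,R)$ into itself, so the Schauder Fixed Point Theorem produces $v$ with $S_\lambda(v) = v$; unravelling the definitions, $F(v) = -\lambda B(v)$, that is $A(v) = \lambda B(v) + v$, which is \eqref{ProblemEigenvalue}. I expect the main obstacle to be the invariant-ball step: since no global bound on $B$ is available, the threshold $\lambda_0$ must be calibrated against the supremum of $B$ over a ball whose radius is pinned down in advance by the fixed point $u_0$ of $A$, and one must check that a single $R$ and $\lambda_0$ serve uniformly for all $\lambda \in [0,\lambda_0]$. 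Establishing compactness of $S_\lambda$ from the Lipschitz inverse is the other technical point to verify carefully.
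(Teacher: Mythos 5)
Your proof is correct, but it takes a genuinely different route from the paper's. The paper deduces the theorem from its hybrid system result (Proposition \ref{PropositionNonLinearEigenvalue2}): it sets $F(u,v) = u - A(u) - v$ and $G(u,v) = \lambda B(P_r(u))$, where $P_r$ is the radial retraction onto the ball of radius $r = \|A(0)\|/(1-m)$; truncating $B$ through $P_r$ makes the range of $G$ bounded, so the invariance hypothesis of that proposition holds automatically for $\lambda$ below $\lambda_0 = \big(1 + \sup_{\|u\|\leq r}\|B(u)\|\big)^{-1}$, and an a posteriori coercivity estimate, $\|u_0\|\big((1-m)\|u_0\| - \|A(0)\|\big) \leq \lambda \langle B(P_r(u_0)),u_0\rangle \leq \|u_0\|$, then shows the solution lies in the region where the retraction is inactive, so it solves the untruncated equation. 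You bypass both the system machinery and the truncation: you invert $I - A$ via Proposition \ref{PropositionPreviousPaper2} (equivalently the Strongly Monotone Principle), extract the quantitative Lipschitz bound $\|(I-A)^{-1}(f) - (I-A)^{-1}(g)\| \leq \tfrac{1}{1-m}\|f-g\|$, and use it to exhibit an a priori invariant ball $\overline{B}(0,\|u_0\|+1)$ for the composed solution operator $S_\lambda = (I-A)^{-1}\circ(-\lambda B)$, closing with Schauder directly. (Your composition is in the reverse order of the paper's --- $B$ first, then $(I-A)^{-1}$ --- but fixed points of the two compositions correspond.) What each approach buys: the paper's route showcases its hybrid framework, which is the theme of the article, and needs only the coercivity inequality rather than an inverse-Lipschitz estimate; yours is self-contained, avoids the retraction and the a posteriori step, and yields an explicit threshold $\lambda_0 = (1-m)/\sup_{\|v\|\leq \|u_0\|+1}\|B(v)\|$. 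The two calibrations are in fact comparable, since strong monotonicity applied to $F(u_0)=0$ gives $\|u_0\| \leq \|A(0)\|/(1-m) = r$, so both suprema of $B$ are taken over balls of essentially the same size. The technical points you flagged --- global solvability defining $S_\lambda$, compactness of $S_\lambda$ inherited from $B$ through the Lipschitz inverse, and uniformity of the single pair $(R,\lambda_0)$ over all $\lambda \in [0,\lambda_0]$ --- are all handled correctly in your argument.
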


\begin{proof}
	Take $r = \frac{\|A(0)\|}{1 - m}$ and define $P_r\colon H \longrightarrow H$ by 
	\begin{equation*}
		P_r(u) := \left\{
		\begin{array}{cl}
			u & \text{if }\|u\|\leq r, \\
			\frac{u}{\|u\|} & \text{if }\|u\| > r.
		\end{array}
		\right.
	\end{equation*}
	Put 
	\begin{equation*}
	    \lambda_0 := \frac{1}{1+\sup_{\|u\|\leq r}\|B(u)\|}
	\end{equation*}
	and fix $\lambda < \lambda_0$. Let $X = Y = H$, $F(u,v) = u - A(u) - v$, and $G(u,v) = \lambda B(P_r(u))$. Then for all $u,w,v\in X$ we get 
    \begin{equation*}
        \langle F(u,v) - F(w,v), u - w\rangle = \|u - w\|^2 - \langle A(u) - A(w), u - w\rangle \geq (1 - m)\|u - v\|^2.
    \end{equation*}
    Moreover $F(0,v) = v$ and hence $F$ satsisfies the assumptions of Proposition \ref{PropositionNonLinearEigenvalue2}. Operator $G$ is clearly continuous and compact since $B$ is a compact mapping. Identifying $H$ with $H^*$ via the Riesz Representation, we can apply Proposition \ref{PropositionNonLinearEigenvalue2} and obtain that there exists $u_0\in H$ such that
	\begin{equation*}
		u_0 - A(u_0) = \lambda B(P_r(u_0)).
	\end{equation*}
	Therefore using \eqref{OneSidedContraction} we get
	\begin{equation*}
		\|u_0\|((1 - m)\|u_0\| - \|A(0)\|) \leq \langle u_0 - A(u_0), u_0\rangle = \lambda \langle B(P_r(u_0)),u_0\rangle \leq \|u_0\|,
	\end{equation*}
	which gives $\|u_0\| \leq r$ and hence $u_0$ solves \eqref{ProblemEigenvalue}. Since $\lambda$ was taken arbitrary from $[0,\lambda_0]$, we get the assertion.
\end{proof}


\begin{remark}
	Following Remark \ref{RemarkAfterFirstTheorem} we can obtain the result which utilizes the Schaefer fixed point theorem: Let $H$ be a Hilbert space, $A\colon H\longrightarrow H$ be a radially continuous one-sided contraction and $B\colon H\longrightarrow H$ be a continuous compact operator. Then either
	\begin{enumerate}
		\item $x=\lambda A(x/\lambda )+\lambda Bx$ has a solution in $H$ for $\lambda =1$
		\item the set of all such solutions, $0<\lambda <1$, is unbounded.
	\end{enumerate}
	The proof is exactly as above and relies on the observation that for any $\lambda \in (0,1)$ mapping $x\longmapsto \lambda A(x/\lambda )$ defines one-sided contraction (independent of such $\lambda $ and same as for mapping $x\longmapsto A(x)$).
\end{remark}

As is mentioned, for example in \cite{ButronKirk}, the main problem about checking the assumptions of the Krasnoselskii Theorem is the invariance condition. In Theorem \ref{theo_on_a_ball} we imposed instead some condition on the numercial parameter which however in direct applications may become rather small and depending on the behaviour of the mapping $B$ on a ball. Therefore we propose some approach, suggested also in \cite{avramescuEJQTDE}, how to deal with this issue.

\begin{theorem}
	Let $D$ be a closed convex subset of a Hilbert space $H$, $A\colon H\longrightarrow H$ be a radially continuous one-sided contraction and $B\colon D\longrightarrow H$ be continuous with $B(D)$ relatively compact. If
	\begin{equation}
		u=A(u)+B(v)\text{ and }v\in D\text{ imply that }u\in D  \label{condKras}
	\end{equation}
	then the mapping $A+B$ has at least one fixed point.
\end{theorem}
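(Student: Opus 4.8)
The plan is to reduce the system to a single fixed-point equation in the second variable, exactly in the spirit of the proof of Theorem \ref{theo_on_a_ball} and Proposition \ref{PropositionNonLinearEigenvalue2}, but with the invariance condition \eqref{condKras} taking over the role played there by a smallness assumption on the parameter. Identifying $H$ with its dual via the Riesz representation, I would set $X = H$, $Y = D$ and define $F\colon H\times D\longrightarrow H$ by $F(u,v) = u - A(u) - B(v)$. For fixed $v$ the operator $F(\cdot,v)$ is radially continuous (since $A$ is), and using \eqref{OneSidedContraction},
\[
\langle F(u,v) - F(w,v), u - w\rangle = \|u - w\|^2 - \langle A(u) - A(w), u - w\rangle \geq (1 - m)\|u - w\|^2,
\]
so $F(\cdot,v)$ is strongly monotone with constant $1 - m > 0$. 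As $B$ is continuous on $D$, the map $F(u,\cdot)$ is continuous for each $u$. Thus $F$ meets the hypotheses of Proposition \ref{PropositionPreviousPaper2} with the metric on $Y = D$ inherited from $H$, and I obtain a continuous map $S\colon D\longrightarrow H$ characterised by $S(v) = A(S(v)) + B(v)$ for all $v\in D$.

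Next I would bring in the remaining two hypotheses. The invariance condition \eqref{condKras}, applied with $u = S(v)$, gives at once $S(v)\in D$ for every $v\in D$, so $S$ is a self-map of $D$. To establish compactness, subtract the defining relations for $S(v_1)$ and $S(v_2)$, pair with $S(v_1) - S(v_2)$, and invoke the one-sided contraction estimate to obtain
\[
(1 - m)\|S(v_1) - S(v_2)\| \leq \|B(v_1) - B(v_2)\|.
\]
Hence $S$ sends $B$-Cauchy sequences to Cauchy sequences: given any sequence $(v_k)\subset D$, relative compactness of $B(D)$ yields a subsequence along which $(B(v_k))$ is Cauchy, and then $(S(v_k))$ is Cauchy and therefore convergent in the complete space $H$. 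Consequently $S(D)$ is relatively compact.

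Finally I would apply the Schauder Fixed Point Theorem to the continuous self-map $S\colon D\longrightarrow D$ of the closed convex set $D$ whose image is relatively compact. If one prefers the classical compact-convex formulation, it suffices to restrict $S$ to the compact convex set $K := \overline{\mathrm{conv}}\,\overline{S(D)}$, which lies in $D$ because $D$ is closed and convex and which is compact by Mazur's theorem; then $S(K)\subseteq S(D)\subseteq K$. Either way I obtain $v^*\in D$ with $S(v^*) = v^*$, whence $v^* = A(S(v^*)) + B(v^*) = A(v^*) + B(v^*)$, so $v^*$ is the sought fixed point of $A + B$.

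The step I expect to be the main obstacle is not the construction of $S$, which is routine once Proposition \ref{PropositionPreviousPaper2} is available, but rather the fact that $D$ is assumed only closed and convex and hence may be unbounded, so that Proposition \ref{PropositionNonLinearEigenvalue2} (which requires a \emph{bounded} invariant convex set) cannot be quoted directly. The key observation making the argument work is that the compactness needed for Schauder is supplied automatically: it is transported from the relative compactness of $B(D)$ through the estimate $\|S(v_1) - S(v_2)\|\leq (1-m)^{-1}\|B(v_1) - B(v_2)\|$, so $S(D)$ is relatively compact even though $D$ itself need not be bounded.
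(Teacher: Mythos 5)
Your proof is correct and takes essentially the same route as the paper: the paper invokes the Strongly Monotone Principle to conclude that $I-A$ is a bijection of $H$ with continuous inverse and then applies the Schauder Fixed Point Theorem to $T=(I-A)^{-1}\circ B$ on $D$, which is exactly your map $S$. The only differences are presentational --- you construct $S$ via the parametric Proposition \ref{PropositionPreviousPaper2} and make the compactness transfer explicit through the estimate $(1-m)\|S(v_1)-S(v_2)\|\leq\|B(v_1)-B(v_2)\|$, steps the paper leaves implicit.
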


\begin{proof}
	We see, by the Strongly Monotone Principle, see \cite{galewskiBOOKS}, that mapping ${I - A\colon H\longrightarrow H}$ is a bijection with a continuous inverse. This means that for any $v\in D$ there is exactly one $u\in H$ such that $u=A(u)+B(v)$. Due to \eqref{condKras} we see that $u\in D$. Hence for the operator 
	\begin{equation}
		\label{DefinitionOfT}
		T(u) := (I-A)^{-1}\left(B(u)\right) \text{.}
	\end{equation}
	we see that $T\colon D\longrightarrow D$. Since $T\left( D\right) $ is relatively compact as well, we get the assertion by the Schauder Fixed Point Theorem.
\end{proof}

Inspired by condition \eqref{condKras} we extend Theorem \ref{TheoremNonLinearEigenvalue} as follows.

\begin{theorem}
	\label{TheoremExtended}
	Let Assumptions \ref{AssumptionH1}, \ref{AssumptionH2} and \ref{AssumptionH3} hold. Moreover assume that there exists a function ${\psi\colon [0,\infty)^2\longrightarrow [0,\infty)}$ such that
	\begin{equation*}
		\|G(u,v)\|\leq \psi(\|u\|,\|v\|)\quad \text{for all }u \in X \text{ and }v\in Y;
	\end{equation*} 
	If there exists $R>0$ such that for any $x \in \mathbb{R}$
	\begin{equation}
		\label{StrangeAssumption}
		\gamma(x,y)\leq 0\text{ and }y \leq R\text{ imply that } \psi(x,y) \leq R.
	\end{equation}
	then system \eqref{MainSystem} has at least one solution.
\end{theorem}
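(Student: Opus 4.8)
The plan is to re-run the perturbation-plus-Schauder scheme from the proof of Theorem~\ref{TheoremNonLinearEigenvalue}, the new ingredient being that the restrictive invariance hypothesis $G(X\times C)\subset C$ used there is now replaced by the weaker growth condition \eqref{StrangeAssumption}, which controls $G$ only along solutions of the first equation. For this reason I would not invoke Theorem~\ref{TheoremNonLinearEigenvalue} as a black box, since its hypothesis $G(X\times C)\subset C$ need not hold here; instead I would reuse its machinery and merely substitute a new argument at the one place where that hypothesis was used.

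First, with $J$ as in Proposition~\ref{LemmaDuality}, set $F_n(u,v)=F(u,v)+\tfrac1n J(u)$ and, exactly as before, obtain from Proposition~\ref{PropostionPreviouPaper} a solution operator $S_n\colon Y\longrightarrow X$ with $F_n(S_n(v),v)=0$ that is sequentially weakly continuous, i.e. $v_m\to v$ implies $S_n(v_m)\rightharpoonup S_n(v)$. Testing the equation at $S_n(v)$ and using $\langle J(u),u\rangle=\|u\|^2$ gives
\[
0=\langle F(S_n(v),v),S_n(v)\rangle+\tfrac1n\|S_n(v)\|^2\ge \gamma(\|S_n(v)\|,\|v\|)+\tfrac1n\|S_n(v)\|^2,
\]
so $\gamma(\|S_n(v)\|,\|v\|)\le 0$ for every $v\in Y$ and every $n$; combined with the uniform coercivity of $\gamma$ from Assumption~\ref{AssumptionH2}, this shows that $K:=\{S_n(v):n\in\mathbb{N},\ v\in C\}$ is bounded whenever $C$ is bounded.

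The key new step is the choice of the invariant set: take $C:=\{v\in Y:\|v\|\le R\}$ with $R$ from \eqref{StrangeAssumption}, which is nonempty, bounded and convex, and define $G_n\colon C\longrightarrow Y$ by $G_n(v)=G(S_n(v),v)$. For $v\in C$ we have $\|v\|\le R$ and $\gamma(\|S_n(v)\|,\|v\|)\le 0$, so \eqref{StrangeAssumption} yields $\psi(\|S_n(v)\|,\|v\|)\le R$, whence $\|G_n(v)\|\le\psi(\|S_n(v)\|,\|v\|)\le R$; that is, $G_n(C)\subset C$. Continuity of $G_n$ follows from the weak continuity of $S_n$ together with \eqref{ContinuityCondition}, while relative compactness of $G_n(C)$ follows from boundedness of $K$ and $C$ and the compactness of $G$ in Assumption~\ref{AssumptionH3}. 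The Schauder Fixed Point Theorem then produces $v_n\in C$ with $G_n(v_n)=v_n$; writing $u_n:=S_n(v_n)$ gives the approximate system \eqref{AuxiliarySystem}.

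It remains to pass to the limit, and here I would repeat the argument of Theorem~\ref{TheoremNonLinearEigenvalue} essentially verbatim: since $(v_n)=\big(G(u_n,v_n)\big)$ lies in a relatively compact set by Assumption~\ref{AssumptionH3}, up to a subsequence $v_n\to v$, and since $(u_n)\subset K$ is bounded, $u_n\rightharpoonup u$; the Minty-type manipulation using $F(u_n,v_n)=-\tfrac1n J(u_n)\to 0$, the monotonicity and radial continuity of $F(\cdot,v)$ and the continuity of $F(u,\cdot)$ yields $F(u,v)=0$, and finally \eqref{ContinuityCondition} gives $G(u,v)=\lim_n G(u_n,v_n)=\lim_n v_n=v$. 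I expect the only genuinely delicate point to be the invariance $G_n(C)\subset C$: one must notice that \eqref{StrangeAssumption} is \emph{not} applied for arbitrary $u$ (where $\gamma$ could be positive and $\psi$ uncontrolled), but precisely at $u=S_n(v)$, where the perturbed equation forces $\gamma(\|S_n(v)\|,\|v\|)\le 0$. This invariance along the solution manifold of the first equation is exactly what permits dropping the condition $G(X\times C)\subset C$.
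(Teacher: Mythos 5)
Your proposal is correct and follows essentially the same route as the paper's own proof: the same perturbation $F_n = F + \tfrac1n J$ with solution operators $S_n$, the same choice of the ball $B_R$ as invariant set, the same observation that the perturbed equation forces $\gamma(\|S_n(v)\|,\|v\|)\le 0$ so that \eqref{StrangeAssumption} yields $G_n(B_R)\subset B_R$, followed by Schauder and the identical limit passage from Theorem \ref{TheoremNonLinearEigenvalue}. Your closing remark correctly identifies the one genuinely new point, namely that invariance is needed only along the solution manifold of the first equation.
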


The following remark explains connections between conditions \eqref{condKras} and \eqref{StrangeAssumption}.
\begin{remark}
	Condition \eqref{condKras} reads as follows: 
	\begin{center}
		\textit{Whenever $v\in D$ and $u$ is a solution to $u = A(u) + B(v)$, then $u\in D$.}
	\end{center}
	It can be reformulated in terms of operator $T$ defined by \eqref{DefinitionOfT} as follows
	\begin{center}
		\textit{Whenever $v\in D$ and $u$ is a solution to $u = A(u) + B(v)$, then $T(v)\in D$.}
	\end{center}
	Now it is easy to see an analogy between condition \eqref{condKras} and the following one.
	\begin{center}
		\textit{Whenever $v\in B_R$ and $u$ is a solution to $F(u,v) = 0$, then $G(u,v)\in B_R$.}
	\end{center}
	Here $B_R := \{v\in Y : \|v\|\leq R\}$. To express this condition in terms of $\gamma$ and $\psi$ (described in Assumption \ref{AssumptionH2} and Theorem \ref{TheoremExtended}, respectively) let us observe that if $u$ is a solution to $F(u,v) = 0$, then
	\begin{equation*}
		\gamma(\|u\|,\|v\|) \leq \langle F(u,v),u\rangle = 0.
	\end{equation*}
	Moreover, condition $\psi(\|u\|,\|v\|) \leq R$, provides $\|G(u,v)\| \leq R$, that is $G(u,v) \in B_R$. Hence we see that \eqref{StrangeAssumption} is (at least in some sense) a counterpart of assumption \eqref{condKras}.
\end{remark}

\begin{proof}[Proof of Theorem \ref{TheoremExtended}]
	Let us denote $B_R := \{v\in Y : \|v\|\leq R\}$. Define $F_n$, \eqref{FnProof} and $S_n$ as in the proof of Theorem \ref{TheoremNonLinearEigenvalue}. Let $G_n \colon B_R \longrightarrow Y$ be given by
	\begin{equation*}
		G_n(v) = G(S_n(v),v).
	\end{equation*}
	$G_n$ is clearly continuous. Moreover, since $\|v\|\leq R$ and since $\gamma$ is uniformly coercive on each bounded interval, set $K := \{S_n(v) : n\in \mathbb{N},\ v\in B_R\}$ is bounded. Therefore 
	\begin{equation*}
		G_n\left(B_R\right) \subset G\left(K\times B_R\right)
	\end{equation*}
	and hence $G_n$ is compact. Now we show that $G_n\colon B_R \longrightarrow B_R$. Let $\|v\|\leq R$. Then
	\begin{equation*}
		\|G_n(v)\| = \|G(S_n(v),v)\|\leq \psi(\|S_n(v)\|,\|v\|) 
	\end{equation*}
	Moreover
	\begin{equation*}
		0 = \langle F_n(S_n(v),v), S_n(v)\rangle = \langle F(S_n(v),v), S_n(v)\rangle + \tfrac{1}{n}\langle J(S_n(v)),S_n(v)\rangle  \geq \gamma(\|S_n(v)\|,\|v\|)
	\end{equation*}
	and hence $\|G_n(v)\|\leq R$ by \eqref{StrangeAssumption}. By the Schauder Fixed Point Theorem, applied to $G_n$, there exists $v_n\in B_R$ such that $G_n(v_n) = v_n$. Take $u_n := S_n(v_n)$. Then $(u_n,v_n)$ solves \eqref{AuxiliarySystem}. Since both, $(u_n)$ and $(v_n)$, are bounded, arguments following relation (4) in the proof of Theorem \ref{TheoremNonLinearEigenvalue} provide the assertion.
\end{proof}

There are also other results pertaining to the Krasonselskii Theorem. The Authors in \cite{Krysz} introduce the class of single- and set-valued Krasnosel'skij-type maps for which they construct a fixed point index theory next applied to constrained differential inclusions and equations.

\section{Applications to the nonlinear systems}

We give applications for a system of mixed nonlinear and semilinear Dirichlet problems for ODE. Firstly, we will describe an abstract framework for perturbed $p$-Laplacian and nonlocal $q$-Laplacian equation. Then we will apply Theorem \ref{TheoremExtended} to obtain solvability of system of equations.

\subsection{Perturbed $p$-Laplacians}

We need some preparation about the nonlinear perturbed Laplacian which we consider and which pertains to the perturbed $p$-Laplacian. It seems that the presented results can be extended to study problems involving $p(\cdot)$-Laplacian with some necessary technical modifications.
\newline
\par
For $p\geq 2$ and $\varphi \colon \left[ 0,1\right] \times \mathbb{R}\longrightarrow \mathbb{R}$ we define  ${D\colon W_{0}^{1,p}(0,1)\times C[0,1]\longrightarrow W^{-1,p^{\prime }}(0,1)}$ by 
\begin{equation*}
	\langle D(u,v),w\rangle =\int_{0}^{1}\varphi \big(t,v(t),|\dot{u}(t)|^{p-1}\big)|\dot{u}(t)|^{p-2}\dot{u}(t)\dot{w}(t)dt.
\end{equation*}

Consider the following hypotheses on $\varphi $, which will lead to the well posedness, continuity, coercivity and monotonicity properties of operator $D$.

\begin{assumption}
	\label{AssumptionPhi}
	Let $\varphi\colon[0,1] \times \mathbb{R}\times [0,\infty) \longrightarrow \mathbb{R}$ and assume that there exist continuous functions $m,M \colon [0,\infty)\longrightarrow (0,\infty)$, {$m$ -- nonincreasing}, such that:
	\begin{enumerate}
		\item[($\Phi$1)]\ $\varphi(\cdot,y,r)$ is Lebesgue measurable for all $y\in \mathbb{R}$ and every $r\geq 0$;
		\item[($\Phi$2)]\ $\varphi(t,\cdot,r)$ and $\varphi(x,y,\cdot)$ are continuous for a.e. $t\in (0,1)$, all $y\in \mathbb{R}$ and every $r\geq 0$;
		\item[($\Phi$3)]\ $m(|y|) \leq \varphi(t,y,r) \leq M(|y|)$ for a.e. $t\in (0,1)$, all $y\in \mathbb{R}$ and every $r\geq 0$;
		\item[($\Phi$4)]\ $\varphi(t,y,r)r \leq \varphi(t,y,s)s$ for a.e. $t\in (0,1)$, all $y\in \mathbb{R}$ and every $s \geq r \geq 0$. 
	\end{enumerate}
\end{assumption}

For a given function $f\colon [0,1]\times \mathbb{R}^2\longrightarrow \mathbb{R}$ we define \emph{the Nemyskii operator} ${N_f\colon W^{1,p}_0(0,1)\times C[0,1]\longrightarrow W^{-1,p'}(0,1)}$ given by the formula 
\begin{equation*}
	\langle N_f(u,v), w \rangle = \int_0^1 f(t, u(t), v(t)) w(t)dt.
\end{equation*}
To obtain well posedness and continuity of $N$ we consider
\begin{assumption}
	\label{AssumptionF}
	A function $f\colon [0,1]\colon \mathbb{R}\times\mathbb{R}\longrightarrow\mathbb{R}$ is such that:
	\begin{itemize}
		\item[(F1)]\ $f(\cdot, u,v)$ is Lebesgue measurable for all $u,v\in \mathbb{R}$;
		\item[(F2)]\ $f(t,\cdot,v)$ is continuous and nonincreasing for a.e. $t\in [0,1]$ and all $v\in \mathbb{R}$;
		\item[(F3)]\ $f(t,u,\cdot)$ is continuous for a.e. $t\in [0,1]$ and all $u\in \mathbb{R}$.
	\end{itemize}
	Moreover
	\begin{itemize}
		\item[(F4)]\ there exists a function $\delta \colon [0,\infty)\longrightarrow [0,\infty)$ satisfying
		\begin{equation*}
			\sup_{\substack{0\leq t\leq 1\\ -v\leq y\leq v}}|f(t,0,y)|\leq \delta(v)\quad \text{for every }v\geq 0.
		\end{equation*}
	\end{itemize}
\end{assumption}

\begin{lemma}
	\label{LemmaOperatorDpPhi} 
	If function $\varphi$ satisfies Assumption \ref{AssumptionPhi} and function $f$ satisfy Assumption \ref{AssumptionF}, then operator ${F = D - N_f}$ satisfies Assumption \ref{AssumptionH2} with $\gamma(x,y) = m(y)x^p - \frac{1}{\lambda_p}\delta(y)x$.
\end{lemma}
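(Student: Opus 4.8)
The plan is to verify, one by one, the three clauses of Assumption \ref{AssumptionH2} for $F = D - N_f$, treating the quasilinear part $D$ and the Nemytskii perturbation $N_f$ separately and then adding. I would first record well-posedness: for $u\in W^{1,p}_0(0,1)$ and $v\in C[0,1]$, Hölder's inequality together with the upper bound $\varphi\le M(|y|)$ from $(\Phi 3)$ gives $\|D(u,v)\|_{W^{-1,p'}}\le M(\|v\|_\infty)\|u\|_p^{p-1}$, so $D$ is well defined and bounded; well-posedness of $N_f$ into $W^{-1,p'}$ rests on the Carathéodory structure $(F1)$--$(F3)$ and the Sobolev embedding $\|w\|_\infty\le\|w\|_p$, since $|\langle N_f(u,v),w\rangle|\le \|f(\cdot,u,v)\|_{L^1}\|w\|_p$.

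The key step is monotonicity of $F(\cdot,v)$. For fixed $v$ I would reduce the pairing $\langle D(u_1,v)-D(u_2,v),u_1-u_2\rangle$ to the pointwise scalar inequality $(a(t,s_1)-a(t,s_2))(s_1-s_2)\ge 0$, where $a(t,s)=\varphi(t,v(t),|s|^{p-1})|s|^{p-2}s$. Writing $h(r)=\varphi(t,v(t),r)\,r$, one has $a(t,s)=\operatorname{sgn}(s)\,h(|s|^{p-1})$, and $(\Phi 4)$ says exactly that $h$ is nondecreasing on $[0,\infty)$; combined with $\varphi>0$ (so $h\ge 0$), this makes $s\mapsto a(t,s)$ nondecreasing on all of $\mathbb{R}$, whence each integrand is nonnegative and $D(\cdot,v)$ is monotone. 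For the perturbation, the monotone nonincreasing dependence of $f$ on its second argument $(F2)$ gives $\langle N_f(u_1,v)-N_f(u_2,v),u_1-u_2\rangle\le 0$, so $-N_f(\cdot,v)$ is monotone; adding yields monotonicity of $F(\cdot,v)$.

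I would establish the two continuity requirements by dominated convergence. For radial continuity of $F(\cdot,v)$, along $\tau_n\to\tau$ the integrand defining $\langle D(u+\tau_n w,v),w\rangle$ converges pointwise by $(\Phi 2)$ and is dominated, for $\tau_n$ in a bounded interval, by $M(\|v\|_\infty)(|\dot u|+C|\dot w|)^{p-1}|\dot w|$, which lies in $L^1$ by Hölder since $(p-1)p'=p$; the analogous pairing for $N_f$ is handled by $(F2)$ and $(F3)$ together with the local $L^1$ domination coming from well-posedness. For continuity of $F(u,\cdot)$, if $v_n\to v$ in $C[0,1]$ then $\sup_n\|v_n\|_\infty<\infty$, and Hölder plus $(p-1)p'=p$ give $\|D(u,v_n)-D(u,v)\|_{W^{-1,p'}}^{p'}\le \int_0^1 |\varphi(t,v_n,|\dot u|^{p-1})-\varphi(t,v,|\dot u|^{p-1})|^{p'}|\dot u|^p\,dt\to 0$ by $(\Phi 2)$, $(\Phi 3)$ and dominated convergence (dominated by $(2M)^{p'}|\dot u|^p$), and likewise for $N_f$ using $(F3)$.

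Finally, coercivity: testing with $u$ and using $(\Phi 3)$ with $m$ nonincreasing gives $\langle D(u,v),u\rangle=\int_0^1\varphi(t,v,|\dot u|^{p-1})|\dot u|^p\,dt\ge m(\|v\|_\infty)\|u\|_p^p$, while $(F2)$ yields the pointwise bound $f(t,u,v)u\le f(t,0,v)u$, so that $(F4)$ and the Sobolev embedding give $\langle N_f(u,v),u\rangle\le \delta(\|v\|_\infty)\|u\|_\infty\le \tfrac{1}{\lambda_p}\delta(\|v\|_\infty)\|u\|_p$; subtracting produces $\langle F(u,v),u\rangle\ge \gamma(\|u\|_p,\|v\|_\infty)$ with $\gamma$ as stated. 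Uniform coercivity on a bounded interval $[0,y_0]$ follows since $m$ is nonincreasing and positive and $\delta$ may be taken nondecreasing (replace it by its monotone envelope), so $\gamma(x,y)\ge m(y_0)x^p-\tfrac{1}{\lambda_p}\delta(y_0)x\to\infty$ as $x\to\infty$ because $p\ge 2$. I expect the main obstacle to be the monotonicity of $D$ --- extracting the scalar inequality and recognizing that $(\Phi 4)$ is precisely the hypothesis that forces $s\mapsto a(t,s)$ to be monotone --- together with the more technical point of securing the $L^1$-domination for continuity of the Nemytskii part, where $(F2)$ gives only one-sided control and one must lean on the well-posedness of $N_f$.
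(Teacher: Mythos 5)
Your proposal is correct and follows the same overall route as the paper's proof: split $F$ into $D$ and $N_f$, verify the clauses of Assumption \ref{AssumptionH2} for each piece separately, and get coercivity by testing with $u$ --- your coercivity computation is essentially the paper's. The difference is one of completeness rather than of method: where the paper disposes of well-posedness, monotonicity and the two continuity properties by citing \cite{BeldzGalKos} and \cite{galewskiBOOKS}, you actually prove them. In particular, your reduction of the monotonicity of $D(\cdot,v)$ to the scalar fact that $s\longmapsto \operatorname{sgn}(s)\,h(|s|^{p-1})$, with $h(r)=\varphi(t,y,r)r$ nonnegative and nondecreasing (which is exactly what ($\Phi$3) and ($\Phi$4) provide), is nondecreasing on all of $\mathbb{R}$, and your dominated-convergence arguments for radial continuity and for continuity in $v$ (using $(p-1)p'=p$ for the $L^1$ domination), supply precisely the details the paper outsources.

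One step does not hold as written: in the coercivity estimate you pass from $\delta(\|v\|_\infty)\|u\|_\infty$ to $\tfrac{1}{\lambda_p}\delta(\|v\|_\infty)\|u\|_p$, but the Sobolev embedding only gives $\|u\|_\infty\le\|u\|_p$, and the inequality $\|u\|_\infty\le\tfrac{1}{\lambda_p}\|u\|_p$ is false on $(0,1)$, where $\lambda_p>1$ for $p\ge 2$ (e.g.\ $\lambda_2=\pi^2$). This is harmless for the substance: dropping the spurious factor yields $\gamma(x,y)=m(y)x^p-\delta(y)x$, which satisfies the coercivity requirement of Assumption \ref{AssumptionH2} equally well (your observation that one may take $\delta$ nondecreasing and use that $m$ is positive and nonincreasing settles the required uniformity on bounded $y$-intervals), and only alters inessential constants where the lemma is invoked later, in the proof of Theorem \ref{TheoremSystemDifferential}. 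You are in good company here: the paper's own proof, which goes through $\|u\|_{L^p}$ instead of $\|u\|_\infty$, commits a parallel slip, since H\"older plus the Poincar\'e inequality give the factor $\lambda_p^{-1/p}$ rather than the stated $\lambda_p^{-1}$. In both arguments the precise constant in $\gamma$ is cosmetic.
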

\begin{proof}
    ($\Phi$1), ($\Phi$2), ($\Phi$3), (F1), (F2) and (F3) provide well posedness of $D$ and $N_f$. Monotonicity of $D$ and $-N_f$ follows by ($\Phi$4) and (F2), respectively. See \cite{BeldzGalKos} or \cite{galewskiBOOKS} for details. Finally we show by a direct calculation that $D-N_f$ is coercive by ($\Phi$3) and (F4). Notice that for every $u\in W^{1,p}_0(0,1)$ and all $v\in C[0,1]$ we have
	\begin{equation*}
		\begin{split}
			\langle D(u,v), u\rangle & = \int_0^1 \varphi (t,v(t),|\dot{u}|^{p - 1})|\dot{u}(t)|^p dt \geq \int_0^1 m(|v(t)|)|\dot{u}(t)|^p dt\\
			&\geq m(\|v\|_\infty)\int_0^1 |\dot{u}(t)|^p dt = m(\|v\|_\infty)\|u\|_p^p.
		\end{split}
	\end{equation*}
	and
	\begin{equation*}
		\begin{split}
			-\langle N_f(u,v),u\rangle & = -\int_0^1 f(t,u(t),v(t))u(t) dt \geq -\int_0^1 f(t, 0, v(t)) u(t) dt\\ 
			& \geq -\left(\int_0^1 |f(t,0,v(t))|^{p'}dt\right)^\frac{1}{p'}\left(\int_0^1 |u(t)|^{p}dt\right)^\frac{1}{p} \\
			& \geq -\tfrac{1}{\lambda_p}\left(\int_0^1|\delta(\|v\|_\infty)|^{p'}dt\right)^\frac{1}{p'}\|u\|_p \geq -\tfrac{1}{\lambda_p} \delta(\|v\|_\infty)\|u\|_p.\qedhere
		\end{split}
	\end{equation*}
\end{proof}
Notice that $D(u,v) - N_f(u,v) = 0$ if and only if $u$ is a weak solution to the following nonlinear boundary value problem
\begin{equation*}
	\left\{
	\begin{array}{ll}
		\frac{d}{dt}\left(\varphi\big(t,v(t),|\dot{u}(t)|^{p-1}\big)|\dot{u}(t)|^{p-2}\dot{u}(t)\right) = f(t, u(t), v(t)) & \text{for }t\in (0,1),\\
		u(0) = v(0) = 0,
	\end{array}
	\right.
\end{equation*}
that is if $u\in W^{1,p}_0(0,1)$ satisfies 
\begin{equation*}
    \int_0^1 \varphi\big(t,v(t),|\dot{u}(t)|^{p-1}\big)|\dot{u}(t)|^{p-2}\dot{w}(t) dt = \int_0^1  f(t, u(t), v(t))w(t) dt\quad \text{for every }w\in W^{1,p}_0(0,1).
\end{equation*}
Using \emph{du Bois-Reymond's Lemma}, see \cite{MawhinProblemesDeDirichlet}, it can be proved that $u$ is a weak solution defined above, function $|\dot{u}(\cdot)|^{p-2}\dot{u}(\cdot)$ is weakly differentiable and hence $f(\cdot,u(\cdot),v(\cdot))$ is a weak derivative of $|\dot{u}(\cdot)|^{p-2}\dot{u}(\cdot)$. Moreover function $|\dot{u}(\cdot)|^{p-2}\dot{u}(\cdot)$ is differentiable almost everywhere in a classical sense. Hence
\begin{equation*}
    -\frac{d}{dt}\left(|\dot{u}(t)|^{p-2}\dot{u}(t)\right) = f(t,u(t),v(t))\quad \text{for a.e. }t\in [0,1].
\end{equation*}

\subsection{$q$-Laplace equation with non-local boundary conditions}
For fixed $q > 1$ and $u\in C[0,1]$ we consider the following nonlinear system 
\begin{equation}\label{NonSys}
	\left\{
	\begin{array}{ll}
		\displaystyle -\frac{d}{dt}\big(|\dot{v}(t)|^{q-2}\dot{v}(t)\big)=g(t,u(t),v(t)) & \text{for }t\in (0,1), \\ 
		\\
		\displaystyle v(0)=\int_0^1h_0(v(s))dA_0(s), \quad v(1)=\int_0^1 h_1(v(s))dA_1(s). &
	\end{array}
	\right.
\end{equation}
Solutions to \eqref{NonSys} are understood in the classical sense, namely a differentiable function $v\colon [0,1] \longrightarrow \mathbb{R}$ is a solution to \eqref{NonSys} if $|\dot{v}(\cdot)|^{q-2}\dot{v}(\cdot)$ is differentiable and if \eqref{NonSys} holds. To study \eqref{NonSys} using the Fixed Point Theory we impose
\begin{assumption}
	\label{AssumptionG}\ 
	\begin{itemize}
	    \item[(G0)]\ Let $g\colon [0,1]\times \mathbb{R}^2 \longrightarrow \mathbb{R}$ and $h_0,h_1\colon \mathbb{R}\longrightarrow \mathbb{R}$ be continuous functions and let $A_0,A_1\colon[0,1]\longrightarrow\mathbb{R}$ have a bounded variation.
		\item[(G1)]\ There are numbers $A,B,C,r\geq 0$ and $0\leq \theta <q-1$ such that
		\begin{equation*}
			g(x,u,v)\leq A|u|^r+B|v|^{\theta} +C \quad \text{for all }x \in [0,1]\text{ and sufficiently large }u,v\in \mathbb{R}.
		\end{equation*}
		\item[(G2)]\ There exist numbers $\alpha_j,\beta_j\geq0$ such that $|h_j(v)|\leq \alpha_j|v| + \beta_j$ for all $v\in\mathbb{R}$, $j=0,1$.
		\item[(G3)]\ Either $\big(2\alpha_0 \operatorname{Var}{A_0}+ \alpha_1 \operatorname{Var}{A_1}\big)<1$ or $\big(2\alpha_1 \operatorname{Var}{A_1}+ \alpha_0 \operatorname{Var}{A_0}\big)<1$, where $\operatorname{Var}{A_j}$ stands for the variation of the function $A_j$, $j=0,1$.
	\end{itemize}
\end{assumption}
Integrals in \eqref{NonSys} are understood in the Riemann-Stieltjes sense. By $V$ we denote the Volterra integral operator, namely
\begin{equation}
	Vv(t)=\int_0^tv(s) ds\quad \text{for every }t\in [0,1].
\end{equation}
Moreover, we define the Nemytskii operator associated with $g$ analogously as before but in a different function setting. Let $N_g\colon (C[0,1])^2 \longrightarrow C[0,1]$ be given by the formula
\begin{equation*}
    N_g(u,v)(t) = g\big(t, u(t),v(t)\big).
\end{equation*}

The function associated with the $q$-Laplacian is denoted by $\psi_q$, that is
\begin{equation*}
	\psi_{q}(\zeta)=|\zeta|^{q-2}\zeta \quad \text{for every }\zeta \in \mathbb{R}.
\end{equation*}
To properly define integral operator associated with \eqref{NonSys} we need
\begin{lemma}\label{Func} We assume condition (G0) from Assumption \ref{AssumptionG}.
	For every $u,v\in C[0,1]$ there is exactly one $c=c(u,v)$ such that 
	\begin{equation*}
		\int_0^1\psi_q^{-1}\big(c(u,v)-VN_g(u,v)(s)\big)ds = \int_0^1 h_1(v(s))dA_1(s)-\int_0^1 h_0(v(s))dA_0(s).
	\end{equation*}
	Moreover, the mapping $C[0,1]\times C[0,1]\ni (u,v)\longmapsto c(u,v)\in \mathbb{R}$ is continuous.
\end{lemma}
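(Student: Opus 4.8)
The plan is to exploit that, for $q>1$, the map $\psi_q$ is a strictly increasing homeomorphism of $\mathbb{R}$ onto itself, so its inverse $\psi_q^{-1}$ is continuous and strictly increasing. For fixed $u,v\in C[0,1]$ I would introduce
\begin{equation*}
    \Phi_{u,v}(c) := \int_0^1 \psi_q^{-1}\big(c - VN_g(u,v)(s)\big)\,ds, \qquad \beta(v) := \int_0^1 h_1(v(s))\,dA_1(s) - \int_0^1 h_0(v(s))\,dA_0(s),
\end{equation*}
so that the claim reduces to showing that $\Phi_{u,v}(c) = \beta(v)$ has a unique root $c=c(u,v)$ which depends continuously on $(u,v)$. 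Note that $VN_g(u,v)\in C[0,1]$ is bounded, say by $b:=\|VN_g(u,v)\|_\infty$, a fact I will use repeatedly.

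For existence and uniqueness at fixed $(u,v)$, strict monotonicity of $\psi_q^{-1}$ gives that $c\mapsto \Phi_{u,v}(c)$ is strictly increasing, while continuity of $\psi_q^{-1}$ together with the uniform bound on the integrand (dominated convergence in $c$) yields continuity of $\Phi_{u,v}$. The sandwich
\begin{equation*}
    \psi_q^{-1}(c - b) \leq \psi_q^{-1}\big(c - VN_g(u,v)(s)\big) \leq \psi_q^{-1}(c + b)
\end{equation*}
shows $\Phi_{u,v}(c)\to\pm\infty$ as $c\to\pm\infty$, so $\Phi_{u,v}$ is a continuous increasing bijection of $\mathbb{R}$, and the root exists and is unique by the intermediate value theorem.

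For the continuity of the selection I would set $H(c,u,v):=\Phi_{u,v}(c)-\beta(v)$, so that $H\big(c(u,v),u,v\big)=0$, and argue by subsequences. First, $H$ is jointly continuous: by (G0) the Nemytskii operator $N_g$ is continuous on $C[0,1]^2$, the Volterra operator $V$ is a bounded linear map into $C[0,1]$, composition with the continuous $\psi_q^{-1}$ and integration over $[0,1]$ preserve continuity, and $v\mapsto\beta(v)$ is continuous since uniform convergence $v_n\to v$ forces $h_j(v_n)\to h_j(v)$ uniformly, whence the Riemann--Stieltjes integrals against the bounded-variation functions $A_j$ converge. Then, given $(u_n,v_n)\to(u,v)$ with $\|u_n\|_\infty,\|v_n\|_\infty\leq M$, the boundedness of $g$ on $[0,1]\times[-M,M]^2$ bounds $\|VN_g(u_n,v_n)\|_\infty$ uniformly and the boundedness of $h_0,h_1$ on $[-M,M]$ together with finiteness of $\operatorname{Var}A_0,\operatorname{Var}A_1$ bounds $\beta(v_n)$; inserting these into the coercivity sandwich forces the scalars $c(u_n,v_n)$ to stay bounded. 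Any convergent subsequence $c(u_{n_k},v_{n_k})\to c_*$ then satisfies $H(c_*,u,v)=0$ by joint continuity, hence $c_*=c(u,v)$ by uniqueness; as every subsequence admits such a sub-subsequence with the same limit, the full sequence converges.

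The main obstacle is precisely this continuity of $(u,v)\mapsto c(u,v)$, and within it the a priori bound on $c(u_n,v_n)$ along a convergent sequence: without it the subsequence argument cannot be closed. This bound is where (G0) genuinely enters, through uniform boundedness of $g,h_0,h_1$ on the compact ranges swept out by a convergent sequence and the finiteness of $\operatorname{Var}A_0,\operatorname{Var}A_1$; the coercivity sandwich for $\Phi_{u,v}$ then converts these uniform bounds into the required control on $c$.
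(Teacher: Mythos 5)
Your proposal is correct and follows essentially the same route as the paper: both reduce the problem to a scalar equation in $c$, use strict monotonicity and the sandwich bound $\psi_q^{-1}(c-\|N_g(u,v)\|_\infty)\leq\Phi_{u,v}(c)\leq\psi_q^{-1}(c+\|N_g(u,v)\|_\infty)$ for existence, uniqueness and the a priori bound on $c(u_n,v_n)$, and then conclude continuity via a subsequence-plus-uniqueness argument (the paper phrases this as a proof by contradiction, you as the sub-subsequence principle, which is the same mechanism). No gaps; the only cosmetic difference is that you isolate the joint continuity of $H$ as an explicit intermediate step, whereas the paper verifies convergence of each term along the extracted subsequence directly.
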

\begin{proof}
	For fixed $u,v\in C[0,1]$ we define 
	\begin{equation*}
		\Theta_{(u,v)}(c)=\int_0^1\psi_q^{-1}\big(c(u,v)-VN_g(u,v)\big)ds + \int_0^1 h_0(v(s))dA_0(s) - \int_0^1 h_1(v(s))dA_1(s).
	\end{equation*} 
	Since $\psi_q$ is continuous and strictly increasing, then so is function $\Theta_{(u,v)}$. Moreover, we have ${\lim_{|c|\to\infty}|\Theta_{(u,v)}(c)|=\infty}$. Hence $\Theta_{(u,v)}(c)=0$ for a unique $c=c(u,v)$. Next, the monotonicity of $\psi_q$ yields
	\begin{equation*}
		\psi_q^{-1}\big(c(u,v)-\|N_g(u,v)\|_{\infty}\big)\leq\int_0^1 \psi_q^{-1}\big(c(u,v)-VN_g(u,v)(s)\big)ds \leq \psi_q^{-1}\big(c(u,v)+\|N_g(u,v)\|_{\infty}\big).
	\end{equation*}
	Hence 
	\begin{equation*}
		\begin{split}
			\psi_q \left(\int_0^1 h_1(v(s))dA_1(s)-\int_0^1 h_0(v(s))dA_0(s)\right) - \|N_g(u,v)\|_{\infty} & \leq c(u,v) \\
			\leq \psi_q\left(\int_0^1 h_1(v(s))dA_1(s)-\int_0^1 h_0(v(s))dA_0(s)\right) & + \|N_g(u,v)\|_{\infty}.
		\end{split}
	\end{equation*}
    Now, let $u_n\to u_0$ and $v_n \to v_0$ in $C[0,1]$ and suppose that $c(u_n,v_n) \not\to c(u_0,v_0)$. Then, since $(c(u_n,v_n))$ is bounded, we see that $c(u_n,v_n)\to c_*\neq c(u_0,v_0)$, up to the subsequence. Moreover, $VN_g(u_n,v_n)\to VN_g(u_0,v_0)$ in $C[0,1]$. Hence we obtain $\psi_q^{-1}\big(c(u_n,v_n)-VN_g(u_n,v_n)(\cdot)\big)\to \psi_q^{-1}\big(c_*-VN_g(u_0,v_0)(\cdot)\big)$ in $L^1(0,1)$. Since $h_0(v_n(\cdot))\to h_0(v(\cdot))$, $h_1(v_n(\cdot))\to h_1(v(\cdot))$ uniformly on $[0,1]$, we get 
	\begin{equation*}
		\begin{split}
			0 & = \lim_{n\to\infty}\left(\int_0^1 \psi_q^{-1}\big(c(u_n,v_n)-VN_g(u_n,v_n)(s)\big)ds+\int_0^1 h_0(v_n(s))\,dA_0(s) -\int_0^1 h_1(v_n(s))dA_1(s)\right) \\
			& =\int_0^1 \psi_q^{-1}\big(c_*-VN_g(u_0,v_0)(s)\big)ds+\int_0^1 h_0(v_0(s))dA_0(s) -\int_0^1 h_1(v_0(s))dA_1(s).
		\end{split}
	\end{equation*}
    By uniqueness of $c(u_0,v_0)$ we need to have $c_*=c(u_0,v_0)$, which is a desired contradiction. Hence ${c(u_n,v_n) \to c(u_0,v_0)}$ and the continuity of $c$ is proved.
\end{proof}
Now we define operator $T\colon W^{1,p}_0(0,1)\times C[0,1]\longrightarrow C[0,1]$ by the formula
\begin{equation}
	\label{OperatorTDifferential}
	T(u,v)(t)=\int_0^t\psi_q^{-1}\big(c(u,v)-VN_g(u,v)(s)\big)ds+\int_0^1h_0(v(s))dA_0(s),
\end{equation}
where $c$ is defined in Lemma \ref{Func}.
\begin{lemma}\label{Oper} We assume condition (G0) from Assumption \ref{AssumptionG}.
	For every $u\in W^{1,p}(0,1)$, the function $v\in C[0,1]$ is a solution to \eqref{NonSys} if and only if it is fixed point of operator $T(u,\cdot)$.
\end{lemma}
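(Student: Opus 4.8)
The plan is to prove both implications by integrating the differential equation in \eqref{NonSys} twice and matching the outcome against the definition \eqref{OperatorTDifferential} of $T$. The whole argument rests on a single observation: if $-\frac{d}{dt}\psi_q(\dot v(t)) = N_g(u,v)(t)$ on $(0,1)$, then integrating from $0$ to $t$ gives $\psi_q(\dot v(t)) = c_0 - VN_g(u,v)(t)$ with $c_0 := \psi_q(\dot v(0))$ a constant, and since $\psi_q$ is a continuous strictly increasing bijection of $\mathbb{R}$ we may apply $\psi_q^{-1}$ and integrate once more to obtain $v(t) = v(0) + \int_0^t \psi_q^{-1}\big(c_0 - VN_g(u,v)(s)\big)\,ds$. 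Comparing this with \eqref{OperatorTDifferential}, the two formulas agree precisely when $v(0) = \int_0^1 h_0(v(s))\,dA_0(s)$ and $c_0 = c(u,v)$, which is where the boundary conditions and Lemma \ref{Func} enter.

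For the ``only if'' direction I would take a classical solution $v$ of \eqref{NonSys}. Recall $u\in W^{1,p}(0,1)\hookrightarrow C[0,1]$, so $N_g(u,v)$ is continuous and $T(u,\cdot)$ is well defined by Lemma \ref{Func}. Integrating the equation and inverting $\psi_q$ as above yields $v(t) = v(0) + \int_0^t \psi_q^{-1}\big(c_0 - VN_g(u,v)(s)\big)\,ds$. The boundary condition at $0$ gives $v(0) = \int_0^1 h_0(v(s))\,dA_0(s)$, so it only remains to show $c_0 = c(u,v)$. Evaluating this at $t=1$ and substituting the boundary condition $v(1) = \int_0^1 h_1(v(s))\,dA_1(s)$ produces exactly the scalar equation that defines $c(u,v)$ in Lemma \ref{Func}; by the uniqueness asserted there, $c_0 = c(u,v)$, and hence $v = T(u,v)$.

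For the ``if'' direction I would start from $v = T(u,v)$, that is
\begin{equation*}
	v(t) = \int_0^t \psi_q^{-1}\big(c(u,v) - VN_g(u,v)(s)\big)\,ds + \int_0^1 h_0(v(s))\,dA_0(s).
\end{equation*}
Setting $t=0$ reads off $v(0) = \int_0^1 h_0(v(s))\,dA_0(s)$, the first boundary condition. Differentiating gives $\dot v(t) = \psi_q^{-1}\big(c(u,v) - VN_g(u,v)(t)\big)$, the integrand being continuous, hence $\psi_q(\dot v(t)) = c(u,v) - VN_g(u,v)(t)$; the right-hand side is $C^1$ with derivative $-N_g(u,v)(t)$, so $\psi_q(\dot v)$ is differentiable and $-\frac{d}{dt}\psi_q(\dot v(t)) = g(t,u(t),v(t))$, which is the differential equation together with the required regularity. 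Finally, evaluating the above formula at $t=1$ and invoking the defining equation of $c(u,v)$ from Lemma \ref{Func} cancels the two $h_0$-integrals and leaves $v(1) = \int_0^1 h_1(v(s))\,dA_1(s)$, the second boundary condition.

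The one place deserving care is the regularity step in the ``if'' direction: since $\psi_q^{-1}$ need not be differentiable at the origin when $q>2$, one should not try to differentiate $\psi_q(\dot v(t))$ through a chain rule. Instead the differentiability of $\psi_q(\dot v)$ is obtained for free from the equality $\psi_q(\dot v(t)) = c(u,v) - VN_g(u,v)(t)$, whose right-hand side is manifestly $C^1$ because $VN_g(u,v)$ is the antiderivative of the continuous function $N_g(u,v)$. The only other bookkeeping point, matching the integration constant with $c(u,v)$, is dispatched in both directions by the existence-and-uniqueness statement of Lemma \ref{Func}.
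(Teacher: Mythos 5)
Your proof is correct and follows essentially the same route as the paper's own (sketched) argument: integrate the equation twice and identify the integration constant with $c(u,v)$ via the uniqueness in Lemma \ref{Func}, then conversely differentiate the fixed-point identity and recover the boundary condition at $t=1$ from the defining equation of $c(u,v)$. Your version is in fact somewhat more careful than the paper's sketch, notably in obtaining the differentiability of $\psi_q(\dot v)$ from the $C^1$ right-hand side rather than from a chain rule on $\psi_q^{-1}$, which indeed fails at the origin when $q>2$.
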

\begin{proof}[Sketch of proof]
    For fixed $u\in W^{1,p}(0,1)$, if $v\in C[0,1]$ is a solution to \eqref{NonSys} we have
    \begin{equation*}
        v(t) = \int_0^1 h_0(v(s))dA_0(s) + \int_0^t \psi_q^{-1}\big(\psi(\dot{v}(0) -  VN_g(u,v)(s)\big).
    \end{equation*}
    By boundary condition for $v(1)$ we obtain
    \begin{equation*}
        \int_0^1\psi_q^{-1}\big(c(u,v)-VN_g(u,v)(s)\big)ds = \int_0^1 h_1(v(s))dA_1(s)-\int_0^1 h_0(v(s))dA_0(s),
    \end{equation*}
    which means that $\dot{v}(0)=c(u,v)$. This gives $v=T(u,v)$. On the other hand if $u\in W^{1,p}(0,1)$, $v\in C[0,1]$ and $v=T(u,v)$ then $v$ is $C^1$ and
    \begin{equation*}
        \dot{v}(t) = \psi^{-1}_q\big (c(u,v) - VN_g(u,v)(t)\big).
    \end{equation*}
    Therefore $|\dot{v}(\cdot)|^{q-2}\dot{v}(\cdot)$ is differentiable and
    \begin{equation*}
        -\frac{d}{dt}\big(|\dot{v}(t)|^{q-2}\dot{v}(t)\big)=g(t,u(t),v(t)).
    \end{equation*}
    Moreover, it is easy to observe that $v$ satisfies the given boundary conditions.
\end{proof}

Via standard argumentations we get the following Lemma.
\begin{lemma}\label{Com} We assume condition (G0) from Assumption \ref{AssumptionG}.
	For fixed $u\in W^{1,p}_0(0,1)$, the operator $T(u,\cdot)\colon C[0,1]\longrightarrow C[0,1]$ is continuous and compact.
\end{lemma}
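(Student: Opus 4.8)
The plan is to establish continuity and compactness separately, in both cases reducing everything to the behaviour of the three building blocks of $T(u,\cdot)$: the Nemytskii operator $v\mapsto N_g(u,v)$, the scalar $v\mapsto c(u,v)$ supplied by Lemma \ref{Func}, and the outer integration against $\psi_q^{-1}$. Throughout, $u$ stays fixed, so $u([0,1])$ is a fixed compact subset of $\mathbb{R}$.

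For continuity I would take a sequence $v_n\to v_0$ in $C[0,1]$ and track the convergence through each layer. Since $g$ is continuous and $u$ is fixed, $N_g(u,v_n)\to N_g(u,v_0)$ uniformly, whence $VN_g(u,v_n)\to VN_g(u,v_0)$ in $C[0,1]$ because $V$ is bounded. Lemma \ref{Func} gives $c(u,v_n)\to c(u,v_0)$, so the arguments $c(u,v_n)-VN_g(u,v_n)(s)$ converge uniformly in $s$; applying the uniform continuity of $\psi_q^{-1}$ on compact sets and integrating in $t$ yields uniform convergence of the first summand of $T$. The second summand $\int_0^1 h_0(v_n(s))\,dA_0(s)$ converges because $h_0(v_n(\cdot))\to h_0(v_0(\cdot))$ uniformly and Riemann--Stieltjes integration against a function of bounded variation is continuous with respect to the sup norm. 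Summing the two limits gives $T(u,v_n)\to T(u,v_0)$.

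For compactness I would fix a bounded set $\mathcal B\subset C[0,1]$, say $\|v\|_\infty\le\rho$, and verify the hypotheses of the Arzel\`a--Ascoli theorem for $T(u,\mathcal B)$. Continuity of $g$ on the compact box $[0,1]\times u([0,1])\times[-\rho,\rho]$ yields a uniform bound on $\|N_g(u,v)\|_\infty$, hence on $\|VN_g(u,v)\|_\infty$; the two-sided estimate for $c(u,v)$ recorded in the proof of Lemma \ref{Func}, together with the growth bounds on $h_0,h_1$, then bounds $c(u,v)$ uniformly over $v\in\mathcal B$. Consequently the arguments of $\psi_q^{-1}$ lie in a fixed compact interval, so the integrand $\psi_q^{-1}(c(u,v)-VN_g(u,v)(s))$ is bounded by a constant $L$ independent of $v\in\mathcal B$ and $s\in[0,1]$. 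This simultaneously yields uniform boundedness of $T(u,\mathcal B)$ and, since the boundary summand is constant in $t$, the Lipschitz estimate $|T(u,v)(t_1)-T(u,v)(t_2)|\le L|t_1-t_2|$, i.e. uniform equicontinuity. Arzel\`a--Ascoli then gives relative compactness of $T(u,\mathcal B)$.

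The argument is largely a bookkeeping exercise once the earlier lemmas are in hand; the one point that genuinely requires care, and which I would treat as the crux, is the uniform control of $c(u,v)$ over the bounded set $\mathcal B$, since every subsequent estimate rests on the integrand's arguments staying in a fixed compact set. Fortunately the explicit two-sided bound derived inside Lemma \ref{Func} delivers exactly this once the estimates on the Stieltjes integrals of $h_0(v)$ and $h_1(v)$ are invoked, so no idea beyond those bounds is needed.
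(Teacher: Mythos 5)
Your proof is correct, and its continuity half coincides with the paper's argument (layerwise passage to the limit through $N_g(u,\cdot)$, $V$, $c(u,\cdot)$, $\psi_q^{-1}$ and the Stieltjes term). For compactness, however, you take a genuinely different route: you verify the Arzel\`a--Ascoli criteria by hand---a uniform bound on $T(u,\mathcal B)$ together with the Lipschitz estimate $|T(u,v)(t_1)-T(u,v)(t_2)|\le L|t_1-t_2|$ with $L$ uniform over the bounded set $\mathcal B$---whereas the paper argues structurally: the Volterra operator is compact, superposition of continuous bounded maps with compact ones is compact, the scalar-valued pieces ($c(u,\cdot)$ and the $h_0$-term) have range in $\mathbb{R}$ and hence are compact once bounded on bounded sets, and sums of such maps remain compact. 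The two arguments are the same mathematics unwound, since the compactness of the Volterra operator is itself an Arzel\`a--Ascoli statement; what your version buys is that the step the paper leaves implicit---namely that $c(u,\cdot)$ is \emph{bounded} on bounded sets, which does not follow from continuity alone in an infinite-dimensional setting but does follow from the two-sided estimate inside Lemma \ref{Func}---is made fully explicit, and you correctly single it out as the crux; what the paper's version buys is brevity. One small blemish: you invoke ``the growth bounds on $h_0,h_1$'', i.e.\ condition (G2), but Lemma \ref{Com} assumes only (G0). This is harmless---continuity of $h_0,h_1$ on the compact interval $[-\rho,\rho]$ already bounds the Stieltjes integrals via $\operatorname{Var}A_j$---but you should cite continuity rather than (G2).
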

\begin{proof}
Let us fix $u\in W^{1,p}_0(0,1)$. Lemma \ref{Func} yields the continuity of $c(u,\cdot)$. As $VN_g(u,\cdot)$ is a composition of the Nemytskii operator and the Volterra integral operator, we see it is continuous map. Next, using the continuity of $\psi^{-1}_q$ and $h_0$, we get that $T(u,\cdot)$ is also continuous. \par 
To obtain the compactness of $T(u,\cdot)$, firstly we observe that $VN_g(u,\cdot)$ is compact since it is a composition of a continuous and bounded operator with the compact one. Since $c(u,\cdot)$ has the values in $\mathbb{R}$, the mapping 
$c(u,\cdot)-VN_g(u,\cdot)$ is compact. Superposition of above map with the Nemytskii operator associated with continuous function $\psi^{-1}_q$ and next with the Volterra integral operator is also compact. Therefore, the first term of $T(u,\cdot)$ is compact. Finally, since range of $v\mapsto \int\limits_{0}^{1}h_0(v(s))ds$ lies in real line, the operator $T(u,\cdot)$ is compact.
\end{proof}
\begin{theorem}
	Let Assumption \ref{AssumptionG} hold. Then for every $u\in W^{1,p}_0(0,1)$ the problem \eqref{NonSys} admits at least one solution.
\end{theorem}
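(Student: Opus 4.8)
The plan is to fix $u\in W^{1,p}_0(0,1)$ once and for all and to reduce the problem to a fixed point statement for the single operator $T(u,\cdot)\colon C[0,1]\longrightarrow C[0,1]$ introduced in \eqref{OperatorTDifferential}. By Lemma \ref{Oper} the solutions of \eqref{NonSys} are precisely the fixed points of $T(u,\cdot)$, and by Lemma \ref{Com} this map is continuous and compact. I would therefore apply the Schauder Fixed Point Theorem, so that all the work goes into producing a closed ball $\overline{B_R}:=\{v\in C[0,1]:\|v\|_\infty\leq R\}$ with $T(u,\overline{B_R})\subset\overline{B_R}$; compactness of $T(u,\cdot)$ then makes $T(u,\overline{B_R})$ relatively compact and Schauder yields a fixed point, i.e.\ a solution of \eqref{NonSys}.

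The core is an a priori estimate on $\|T(u,v)\|_\infty$. Writing $W(v):=\int_0^1 h_1(v(s))\,dA_1(s)-\int_0^1 h_0(v(s))\,dA_0(s)$, condition (G2) gives $|W(v)|\leq(\alpha_1\operatorname{Var}A_1+\alpha_0\operatorname{Var}A_0)\|v\|_\infty+\beta_1\operatorname{Var}A_1+\beta_0\operatorname{Var}A_0$, while (G1) together with the continuity of $g$ and the fact that $u$ (hence $\|u\|_\infty$) is fixed yields $\|N_g(u,v)\|_\infty\leq B\|v\|_\infty^{\theta}+C_u$, where $C_u$ absorbs the $A|u|^r$ term and the behaviour of $g$ on bounded sets. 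The bounds on $c(u,v)$ from Lemma \ref{Func} give $|c(u,v)|\leq|W(v)|^{q-1}+\|N_g(u,v)\|_\infty$, whence
\begin{equation*}
	\big|\psi_q^{-1}\big(c(u,v)-VN_g(u,v)(s)\big)\big|\leq\big(|W(v)|^{q-1}+2\|N_g(u,v)\|_\infty\big)^{\frac{1}{q-1}}.
\end{equation*}
Since $\theta<q-1$, the quantity $\|N_g(u,v)\|_\infty\sim B\|v\|_\infty^{\theta}$ is of lower order than $|W(v)|^{q-1}\sim\|v\|_\infty^{q-1}$, so after taking the $\tfrac{1}{q-1}$ power the contribution of $N_g$ is sublinear in $\|v\|_\infty$ and the leading term is $|W(v)|\leq(\alpha_0\operatorname{Var}A_0+\alpha_1\operatorname{Var}A_1)\|v\|_\infty+\mathrm{const}$.

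Collecting the two summands of \eqref{OperatorTDifferential}, namely the integral of $\psi_q^{-1}$ and the additive boundary term $\int_0^1 h_0(v(s))\,dA_0(s)$ (the latter bounded by $\alpha_0\operatorname{Var}A_0\,\|v\|_\infty+\beta_0\operatorname{Var}A_0$ via (G2)), the leading linear coefficient of $\|T(u,v)\|_\infty$ in $\|v\|_\infty$ equals $2\alpha_0\operatorname{Var}A_0+\alpha_1\operatorname{Var}A_1$. The extra factor $2$ on the $A_0$-term reflects that $A_0$ enters both inside $\psi_q^{-1}$ (through $W(v)$, hence $c(u,v)$) and through the additive term. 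If instead one rewrites $T(u,\cdot)$ based at the endpoint $t=1$ (using the $v(1)$ datum and $\int_0^1 h_1(v(s))\,dA_1(s)$), the symmetric coefficient $2\alpha_1\operatorname{Var}A_1+\alpha_0\operatorname{Var}A_0$ appears. Condition (G3) guarantees that at least one of these two coefficients is $<1$, so choosing the corresponding representation gives $\|T(u,v)\|_\infty\leq\kappa\|v\|_\infty+o(\|v\|_\infty)$ with $\kappa<1$; hence $\|T(u,v)\|_\infty<\|v\|_\infty$ once $\|v\|_\infty\geq R$ for $R$ large enough, which is exactly the invariance $T(u,\overline{B_R})\subset\overline{B_R}$ required by Schauder.

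The main obstacle is this a priori estimate, and in particular pushing the linear growth correctly through the nonlinear $\psi_q^{-1}$: one must verify that the sub-$(q-1)$ growth of $g$ in $v$ really does render the $N_g$-contribution sublinear after the $\tfrac{1}{q-1}$ power, and that the boundary terms assemble into exactly the coefficient appearing in (G3), including the factor $2$ and the two-endpoint symmetry. A secondary technical point is controlling $\|N_g(u,v)\|_\infty$ as a \emph{two-sided} bound: (G1) is stated only as an upper bound on $g$, so one has to supply the matching lower bound (or check that only the upper bound genuinely enters the relevant inequalities) before the chain above is fully rigorous.
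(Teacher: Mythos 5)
Your proposal is correct and follows the paper's own proof essentially step for step: fix $u$, reduce to fixed points of $T(u,\cdot)$ via Lemma \ref{Oper}, use Lemma \ref{Com} for continuity and compactness, derive the a priori bound whose leading linear coefficient is $2\alpha_0\operatorname{Var}A_0+\alpha_1\operatorname{Var}A_1$ (with the symmetric coefficient handling the other alternative in (G3), exactly the paper's ``the remaining case follows likewise''), and apply Schauder on a sufficiently large ball. Your closing caveat about (G1) being only a one-sided bound on $g$ is a fair observation --- the paper's proof likewise uses it as a bound on $\|N_g(u,v)\|_\infty$, i.e.\ two-sidedly --- but it does not alter the fact that your argument coincides with the paper's.
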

\begin{proof}

We assume for the proof the first condition in (G3) holds. The remaining case follows likewise. 
	Since assumption (G2) holds we get 
	\begin{equation}
		\label{h0}
		\bigg|\int_0^1h_0(v(s))dA_0(s)\bigg|\leq (\alpha_0\|v\|_{\infty} + \beta_0)\operatorname{Var}{A_0}.
	\end{equation}
	According to the proof of Lemma \ref{Func} and assumptions (G1), (G2), we have 
	\begin{equation*}
	    \begin{split}
		    |c(u,v)-N_g(u,v)(s)|\leq & \bigg|\psi_q\bigg(\int_0^1 h_1(v(s))dA_1(s)-\int_0^1 h_0(v(s))dA_0(s)\bigg)\bigg|+2\|N_g(u,v)\|_{\infty} \\
		    \leq &\psi_q\Big(\big(\alpha_1\operatorname{Var}{A_1}+\alpha_0\operatorname{Var}{A_0}\big)\|v\|_{\infty} + \beta_1\operatorname{Var}{A_1}+\beta_0\operatorname{Var}A_0\Big)\\
		    & +2\left(A\|u\|^{r}_{\infty} + B\|v\|^{\theta}_{\infty}+C\right).
	    \end{split}
	\end{equation*}
	Above estimation combined with \eqref{h0} give a following inequality 
	\begin{equation*}
	    \begin{split}
		    \|T(u,v)\|_\infty\leq& \int_0^1|\psi_q^{-1}\big(c(u,v)-VN_g(u,v)(s)\big)|\,ds+\bigg|\int_0^1h_0(v(s))\,dA_0(s)\bigg| \\
		    \leq& \int_0^{1}\psi^{-1}_q\bigg(\psi_q\Big(\big(\alpha_1\operatorname{Var}{A_1}+\alpha_0\operatorname{Var}{A_0}\big)\|v\|_{\infty}\Big) + \beta_1\operatorname{Var}{A_1} +\beta_0\operatorname{Var}{A_0}+\\
		    &+2(A\|u\|_{\infty}^r+B\|v\|^{\theta}_{\infty}+C)\bigg)ds+\alpha_0\|v\|_{\infty}\operatorname{Var}{A_0}  + \beta_0 \operatorname{Var}{A_0}\\
		    \leq& \big(\alpha_1\operatorname{Var}{A_1}+2\alpha_0\operatorname{Var}{A_0}\big)\|v\|_{\infty}+\\
		    &+\beta_1\operatorname{Var}{A_1} +\beta_0\operatorname{Var}{A_0} +2\Big(A\|u\|^r_\infty+2C\Big)^{\frac{1}{q-1}}+(2B)^{\frac{1}{q-1}}\|v\|_{\infty}^{\frac{\theta}{q-1}},
		\end{split}
	\end{equation*}
	therefore
	\begin{equation}
            \begin{split}
		      \label{BoundednessOfT}
		      \|T(u,v)\|_{\infty}\leq & \big(\alpha_1\operatorname{Var}{A_1}+2\alpha_0\operatorname{Var}{A_0}\big)\|v\|_{\infty} +\beta_1\operatorname{Var}{A_1} \\ 
                &+\beta_0\operatorname{Var}{A_0} +\Big(2A\|u\|^r_\infty+2C\Big)^{\frac{1}{q-1}}+(2B)^{\frac{1}{q-1}}\|v\|_{\infty}^{\frac{\theta}{q-1}}.
            \end{split}
	\end{equation}
	Since assumption (G3) holds and $0\leq \theta_2<q_1$, we have $\|T_u(v)\|_{\infty}\leq R$ for $\|v\|_{\infty}\leq R$ with sufficiently large $R>0$. By Lemma \ref{Com} the operator $T_v$ is completely continuous, hence the existence of solution to \eqref{NonSys} is a consequence of the Schauder Fixed Point Theorem applied to the ball centered at 0 with radius $R$.
\end{proof}

Problems that are considered in this section can be viewed in a framework of some recently introduced in \cite{PrecupNonLocal} control approach towards nonlinear equations investigated by fixed point techniques.

\subsection{System of equations}

To show possible application of Theorem \ref{TheoremExtended} we study solvability of the following system of differential equations.
\begin{equation}
	\label{SystemOfEquations}
	\left\{
	\begin{array}{ll}
		\displaystyle -\frac{d}{dt}\big(\varphi (t,v(t),|\dot{u}(t)|^{p-1})|\dot{u}(t)|^{p-2}\dot{u}(t)\big) = f(t,u(t),v(t)) & \text{for }t\in (0,1), \\
		 & \\
		\displaystyle -\frac{d}{dt}\big(|\dot{v}(t)|^{q-2}\dot{v}(t)\big)=g(t,u(t),v(t)) & \text{for }t\in (0,1), \\
		 & \\
		u(0) = u(1) = 0, & \\
		 & \\
		\displaystyle v(0)=\int_0^1h_0(v(s))dA_0(s), \quad v(1)=\int_0^1 h_1(v(s))dA_1(s). &
	\end{array}
	\right.
\end{equation}
Notice that, due to the already presented existence results, system \eqref{SystemOfEquations} is handled our hybrid method which glues together monotone operators and the fixed point approach. Therefore derivatives in the first equation is understood in the weak sense, while in the second one -- in a classical sense.
\begin{theorem}
    \label{TheoremSystemDifferential}
	Assume that Assumptions \ref{AssumptionPhi}, \ref{AssumptionF} and \ref{AssumptionG} hold. If there exists 
        \begin{equation*}
            \sigma < \frac{(p-1)(q-1)}{r}
        \end{equation*}
        and $\alpha,\beta > 0$ such that 
	\begin{equation}
	    \label{InequalitySigma}
		\frac{\delta(y)}{m(y)} \leq \alpha y^\sigma + \beta \quad \text{for every }y\geq 0,
	\end{equation}
	then system \eqref{SystemOfEquations} has at least one solution.
\end{theorem}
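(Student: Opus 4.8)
The plan is to recast \eqref{SystemOfEquations} into the abstract form \eqref{MainSystem} and apply Theorem \ref{TheoremExtended}. I would take $X = W^{1,p}_0(0,1)$ (real, reflexive and separable for $p>1$, so Assumption \ref{AssumptionH1} holds) and $Y = C[0,1]$, set $F = D - N_f$ for the first, weak $p$-Laplacian equation and $G = T$, the integral operator \eqref{OperatorTDifferential}, for the second, classical nonlocal $q$-Laplacian equation. Lemma \ref{LemmaOperatorDpPhi} already gives that $F$ satisfies Assumption \ref{AssumptionH2} with $\gamma(x,y) = m(y)x^p - \frac{1}{\lambda_p}\delta(y)x$, while Lemmas \ref{Oper} and \ref{Com} identify the fixed points of $T(u,\cdot)$ with the solutions of \eqref{NonSys}. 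Thus a solution $(u,v)$ of \eqref{MainSystem} for this choice of $F,G$ is exactly a solution of \eqref{SystemOfEquations}.

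Before invoking Theorem \ref{TheoremExtended} I would upgrade the fixed-$u$ statements of Lemma \ref{Com} to the joint Assumption \ref{AssumptionH3}. The key observation is that $T$ depends on $u$ only through $N_g(u,v)=g(\cdot,u(\cdot),v(\cdot))$, hence only through $u$ in the $C[0,1]$ topology. Since the embedding $W^{1,p}_0(0,1)\hookrightarrow C[0,1]$ is compact, a sequence $u_n\rightharpoonup u$ in $X$ satisfies $u_n\to u$ in $C[0,1]$; the joint continuity of $c$ from Lemma \ref{Func} then yields the convergence \eqref{ContinuityCondition}. The same compact embedding maps bounded subsets of $X\times Y$ into bounded subsets of $C[0,1]^2$, on which $T$ is compact by the Arzel\`a--Ascoli argument already used in Lemma \ref{Com} (the Volterra operator $V$ supplies the equicontinuity). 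Hence $G=T$ satisfies Assumption \ref{AssumptionH3}.

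Next I would produce the majorant required by Theorem \ref{TheoremExtended}. Assuming (after relabelling, if needed) the first alternative of (G3), the bound \eqref{BoundednessOfT} together with the Sobolev embedding $\|u\|_\infty\le\|u\|_p$ gives $\|T(u,v)\|_\infty\le\psi(\|u\|,\|v\|)$ with
\begin{equation*}
    \psi(x,y) = a\,y + b + \big(2Ax^r+2C\big)^{\frac{1}{q-1}} + (2B)^{\frac{1}{q-1}}\, y^{\frac{\theta}{q-1}},
\end{equation*}
where $a = 2\alpha_0\operatorname{Var}A_0 + \alpha_1\operatorname{Var}A_1 < 1$ and $b = \beta_1\operatorname{Var}A_1 + \beta_0\operatorname{Var}A_0$. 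It remains to verify \eqref{StrangeAssumption} for a suitably large $R$. Here the constraint $\gamma(x,y)\le 0$ is decisive: it reads $m(y)x^p\le\frac{1}{\lambda_p}\delta(y)x$, so $x^{p-1}\le\frac{\delta(y)}{\lambda_p m(y)}$, and with \eqref{InequalitySigma} this yields $x\le\big(\frac{\alpha y^\sigma+\beta}{\lambda_p}\big)^{1/(p-1)}$. Substituting into $\psi$, the coupling term $\big(2Ax^r+2C\big)^{1/(q-1)}$ is controlled, for large $y$, by a constant times $y^{\sigma r/((p-1)(q-1))}$, and the hypothesis $\sigma < (p-1)(q-1)/r$ makes this exponent strictly below $1$; likewise $y^{\theta/(q-1)}$ is sublinear because $\theta<q-1$.

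Consequently, on the set $\{(x,y):\gamma(x,y)\le 0,\ y\le R\}$ every term of $\psi$ except $a\,y$ is $o(R)$ as $R\to\infty$, so $\psi(x,y)\le a R + o(R)$; since $a<1$ this is $\le R$ once $R$ is large enough, which furnishes the radius demanded by \eqref{StrangeAssumption}. With Assumptions \ref{AssumptionH1}--\ref{AssumptionH3}, the majorant $\psi$ and condition \eqref{StrangeAssumption} in place, Theorem \ref{TheoremExtended} delivers the desired solution. The main obstacle is precisely the growth bookkeeping of the previous paragraph: one must track how the coercivity bound on $x$ (exponent $1/(p-1)$) propagates through the power $r$ and then through the inverse $q$-Laplacian (exponent $1/(q-1)$), so that the combined exponent $\sigma r/((p-1)(q-1))$ stays below $1$ --- this is exactly where the standing hypothesis on $\sigma$ enters, while (G3) is what lets the genuinely linear term $a\,y$ be absorbed.
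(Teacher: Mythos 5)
Your proposal is correct and follows essentially the same route as the paper's own proof: the identical decomposition $F = D - N_f$, $G = T$, Lemma \ref{LemmaOperatorDpPhi} for Assumption \ref{AssumptionH2}, Lemma \ref{Com} plus the compact embedding $W^{1,p}_0(0,1)\hookrightarrow C[0,1]$ for Assumption \ref{AssumptionH3}, and the bound \eqref{BoundednessOfT} combined with $\gamma(x,y)\le 0 \Rightarrow x^{p-1}\le \delta(y)/(\lambda_p m(y))$ and \eqref{InequalitySigma} to verify \eqref{StrangeAssumption} before invoking Theorem \ref{TheoremExtended}. Your write-up is in fact slightly more explicit than the paper's at the final step, where you spell out the absorption argument $\psi \le aR + o(R) \le R$ that the paper dismisses with ``it is clear.''
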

\begin{proof}
	We define $F\colon W^{1,p}_0(0,1)\times C[0,1]\longrightarrow W^{-1,p'}(0,1)$ and $G\colon W^{1,p}_0(0,1) \times C[0,1]\longrightarrow C[0,1]$ by
	\begin{equation*}
		\begin{split}
			\langle F(u,v), w\rangle = & \int_0^1 \varphi (t,v(t),|\dot{u}(t)|^{p-1})|\dot{u}(t)|^{p-2}\dot{u}(t) \dot{w}(t)dt- \int_0^1 f(t,u(t),v(t))w(t)dt,\\
			G(u,v)(t) = & \int_0^{t}\psi_q^{-1}\left(c(u,v)-\int_0^s g(\tau,u(\tau),v(\tau))d\tau\right)ds+\int_0^1h_0(v(s))dA_0(s).
		\end{split}
	\end{equation*}
	By Lemma \ref{LemmaOperatorDpPhi} operator $F$ satisfies Assumptions \ref{AssumptionH2}. To show that $G$ satisfies Assumption \ref{AssumptionH3} it is sufficient to apply Lemma \ref{Com} and recall the compactness of embedding $W^{1,p}_0(0,1)\hookrightarrow C[0,1]$. Indeed, take $u_n\rightharpoonup u_0$ in $W^{1,p}_0(0,1)$ and $v_n\to v_0$ in $C[0,1]$. Then both $u_n \to u_0$ and $v_n \to v_0$ in $C[0,1]$. Next we argue as in the proof of Lemma \ref{Com}. Therefore to apply Theorem \ref{TheoremExtended} it is sufficient to check assumption \eqref{StrangeAssumption}. Using \eqref{BoundednessOfT} and the Sobolev inequality we see there exists $a \in (0,1)$ and $b,c > 0$ such that
	\begin{equation*}
		\psi(x,y) = a y+ b x^{\frac{r}{q-1}} + c y^{\frac{\theta}{q-1}}.
	\end{equation*}
	Now, if $\gamma(x,y)\leq 0$, then $x^{p-1} \leq \frac{\delta(y)}{\lambda_p m(y)}$. Hence by assumption, there is $\widetilde{b},\widetilde{d} > 0$ such that
	\begin{equation*}
		\psi(x,y) \leq a y + \widetilde{b}y^{\frac{\sigma r}{(p-1)(q-1)}}y + cy^\frac{\theta}{q-1} + \widetilde{d}.
	\end{equation*}
	It is clear that there exists $R > 0$ such that 
	\begin{equation*}
		a y + \widetilde{b}y^{\frac{\sigma r}{(p-1)(q-1)}} + cy^\frac{\theta}{q-1} + \widetilde{d} \leq R \quad \text{whenever}\quad y \leq R.
	\end{equation*}
	Therefore Theorem \ref{TheoremExtended} can be applied to obtain the assertion.
\end{proof}

\begin{example}
    Let us consider a system 
    \begin{equation}
	    \label{SystemOfExample}
	    \left\{
	    \begin{array}{ll}
		    \displaystyle -\frac{d}{dt}\big(|\dot{u}(t)|\dot{u}(t)\big) = |v(t)|^2 - v(t)^2 u(t)^5 - v(t)^4u(t) + t^2 & \text{for }t\in (0,1), \\
		     & \\
		    \displaystyle -\frac{d}{dt}\big(|\dot{v}(t)|^{2}\dot{v}(t)\big) = v(t)\cos(v(t)) + u(t)\sqrt{|u(t)|} + \cos(u(t)) + v(t)\sin(t) & \text{for }t\in (0,1), \\
		     & \\
		    u(0) = u(1) = 0, & \\
		     & \\
		    \displaystyle v(0) = \int_0^1 \sin\big(v(s)\big) dA_0(s), \quad v(1)=\int_0^1 \cos\big(v(s)\big)dA_1(s), & 
	    \end{array}
	    \right.
    \end{equation}
    where $A_0,A_1\colon[0,1]\longrightarrow\mathbb{R}$ are arbitrary functions with a bounded variation.
    To apply Theorem \ref{TheoremSystemDifferential} we let $p = 3$, $q = 4$ and define $\varphi,f,g\colon [0,1]\times\mathbb{R}\times \mathbb{R}\longrightarrow \mathbb{R}$ by
    \begin{equation*}
        \begin{split}
            \varphi(t,u,v) & = 1,\\
            f(t,u,v) & = |v|^2 - v^2 u^5 - v^4u + t^2,\\
            g(t,u,v) & = v\cos(v) + u\sqrt{|u|} + \cos(u) + v\sin(t).
        \end{split}
    \end{equation*}
    We show that $\varphi$, $f$ and $g$ satisfy Assumption \ref{AssumptionPhi}, \ref{AssumptionF} and \ref{AssumptionG}, respectively. It is clear that conditions ($\Phi$1), ($\Phi$2), ($\Phi$3), ($\Phi$4), (F1), (F2), (F3), (G0) hold. Since (G2) is satisfied with $\alpha_0 = \alpha_1 = 0$ and $\beta_0 = \beta_1 = 0$, assumption (G3) holds. Moreover $\delta\colon [0,\infty) \longrightarrow [0,\infty)$ given by 
    \begin{equation*}
        \delta(v) = v^2 + 1
    \end{equation*}
    satisfies conditions (F4). Moreover
    \begin{equation*}
        |g(t,u,v)| \leq 2|v| + |u|^{3/2} + 1
    \end{equation*}
    Therefore we can take $B = 2$, $A = C = 1$, $r = \frac{3}{2}$ and $\theta = 1$ in condition (G1). Finally let us observe that  \eqref{InequalitySigma} holds with $\sigma = 2$, $\alpha = \beta = 1$. Therefore solvability of system \eqref{SystemOfExample} follows by Theorem \ref{TheoremSystemDifferential}.
\end{example}

\end{document}